\newtheorem{theorem}{Theorem}[section]
\newtheorem{lemma}[theorem]{Lemma}
\newtheorem{note}[theorem]{Note}
\newtheorem{cor}[theorem]{Corollary}
\newtheorem*{Theorem1'}{Theorem 1'}
\theoremstyle{definition}
\theoremstyle{remark}
\numberwithin{equation}{section}
\newcommand \Q{{\mathbb Q}}
\newcommand \Ci{{\mathbb C}}
\newcommand \chr{{\mathrm {char}}}
\newcommand \C{{\mathcal C}}
\newcommand \R{{\mathbb R}}
\newcommand \B{{\mathcal B}}
\newcommand \GL{{\mathrm {GL}}}
\newcommand \si{{\sigma}}
\begin{document}

\title [Equivalence and normal forms of bilinear forms] {Equivalence and normal forms of bilinear forms}

%\author{S. Ruhallah Ahmadi}
%\address{Department of Mathematics and Statistics, Univeristy of Regina, Canada}
%\email{seyed.ruhala.ahmadi@gmail.com}

%\author{Martin Chaktoura}
%\address{Department of Mathematics and Statistics, Univeristy of Regina, Canada}
%\email{martin\_chaktoura@yahoo.com.ar}

\author{Fernando Szechtman}
\address{Department of Mathematics and Statistics, Univeristy of Regina, Canada}
\email{fernando.szechtman@gmail.com}
\thanks{The author was supported in part by an NSERC discovery grant}

%\author{Martin Chaktoura}
%\address{Department of Mathematics and Statistics, Univeristy of Regina, Canada}
%\email{martin_chaktoura@yahoo.com.ar}

%    General info
\subjclass[2000]{15A21, 15A63}

%\date{January 1, 2001 and, in revised form, June 22, 2001.}

%\dedicatory{This paper is dedicated to our advisors.}

\keywords{bilinear forms; congruence; canonical forms}

\begin{abstract} We present an alternative account of the
problem of classifying and finding normal forms for arbitrary
bilinear forms. Beginning from basic results developed by Riehm,
our solution to this problem hinges on the classification of
indecomposable forms and in how uniquely they fit together to
produce all other forms. We emphasize the use of split forms,
i.e., those bilinear forms such that the minimal polynomial of the
asymmetry of their non-degenerate part splits over ground field,
rather than restricting the field to be algebraically closed. In
order to obtain the most explicit results, without resorting to
the classification of hermitian, symmetric and quadratic forms, we
merely require that the underlying field be quadratically closed.
\end{abstract}

\maketitle

\section{Introduction}

The problem of classifying arbitrary bilinear forms up to
equivalence, that is, arbitrary square matrices up to congruence,
has been solved through the work of Williamson \cite{W1},
\cite{W2}, \cite{W3}, Wall \cite{W}, Riehm \cite{R} and Gabriel
\cite{G}.

Over a general field, the solution consists of reducing the
classification to the case of hermitian and symmetric forms,
although in characteristic 2 this reduction involves the
classification of quadratic forms, as well. It would perhaps be
fair to refer to this as a relative solution.

Over an algebraically closed field it is possible to solve the
equivalence problem explicitly, as done by Riehm \cite{R}.
Moreover, very simple normal forms have been obtained in this case
by Horn and Sergeichuk \cite{HS3}.

Somewhere in between lies the case of {\em split} forms. We define
these as bilinear forms $f$ such that the minimal polynomial of
the asymmetry of the non-degenerate part of $f$ splits over ground
field (these terms are defined below). Of course, every bilinear
form defined over an algebraically closed field is split, but this
requirement is perhaps too demanding. All bilinear forms
considered in this paper will be assumed to be split. This is
similar in spirit to Jacobson's \cite{J} decision to consider
split semisimple Lie algebras in characteristic 0, rather than
restricting the ground field to be algebraically closed.

In this paper, we furnish an alternative account of how the classification problem
can be solved and how normal forms
can be produced, in the case of split forms. The end results are explicit and the means to
arrive at them are extremely simple. To achieve these goals we
must sacrifice generality by requiring the ground field to be
quadratically closed at certain strategic points.

Details of our strategy and prior work on the subject are
discussed below. Let us start, however, by reviewing some of the
known results on the classification of bilinear forms in the
classical case of alternating and symmetric forms. In either case,
after splitting the radical, one is reduced to consider
non-degenerate forms only.

It is well-known that a finite dimensional vector space $V$ admits
a non-degenerate alternating form if and only if $\dim(V)$ is
even, in which case any two such forms defined on $V$ are
equivalent.

For non-degenerate symmetric bilinear forms the classification is
field dependent. Let $f,g:V\times V\to F$ be two such forms, where
$F$ is a field and $V$ is a finite dimensional vector space over
$F$. Let us write $f\sim g$ to mean that $f$ and $g$ are
equivalent.

Suppose first that $F$ is quadratically closed. If $\chr(F)\neq 2$
then $f\sim g$. If $\chr(F)=2$ then $f\sim g$ if and only if $f$
and $g$ are both alternating or both non-alternating.

Over the reals, $f\sim g$ if and only if they have the same
signature; over a finite field the decisive condition is the
discriminant in characteristic not 2, and simply the alternating
or non-alternating nature of the forms in characteristic 2; over
the $p$-adic field $\Q_p$ the invariants are the discriminant and
the Hasse symbol, while $f\sim g$ over $\Q$ if and only if $f\sim
g$ over $\R$ and $\Q_p$ for every prime $p$. See \cite{K} for
details about the above cases, except the last which can be found
in \cite{O}, which also treats the more general cases of arbitrary
local and global fields.

Let us move to the case of arbitrary bilinear forms. The starting
point is the following result of Gabriel \cite{G}: Any bilinear
form, say $f$, decomposes as the orthogonal direct sum of finitely
many indecomposable degenerate forms and a non-degenerate form.
Moreover, all summands in this decomposition are uniquely
determined by~$f$, up to equivalence. The uniqueness is really
only up to equivalence, as shown by Djokovic and Szechtman
\cite{DS2} in their study of the isometry group of an arbitrary
bilinear form. In this regard, see \cite{S} and \cite{D}.
Gabriel's result effectively reduces the classification problem to
the case of non-degenerate forms. In particular, Gabriel
determines all indecomposable degenerate forms; there is only one
for each dimension, and we will refer to it as a Gabriel block.
The above formulation of Gabriel's result was given by Waterhouse
\cite{WW}, who also included a matrix appearance of Gabriel's
blocks. Waterhouse used Gabriel's decomposition to compute the
number of congruence classes in $M_n(F_q)$, following Gow
\cite{RG}, who had previously obtained the corresponding result
for $\GL_n(F_q)$, perhaps unaware of \cite{G}. In his proof,
Gabriel uses the theory of Kronecker modules (better known to
linear algebraists as the theory of matrix pencils) and in
particular the Krull-Remak-Schmidt theorem for these modules. An
elementary proof of the uniqueness part of Gabriel's result was
given by Djokovic and Szechtman \cite{DS}, in the more general
context of sesquilinear forms over semisimple artinian rings with
an involution. A simple proof of the existence of Gabriel's
decomposition was furnished by Djokovic, Szechtman and Zhao
\cite{DSZ}. An alternative new short proof is given in
\S\ref{sec:existence}. We note that \cite{DSZ} gives an algorithm
that given any  $A\in M_n(F)$ produces $X\in\GL_n(F)$ such that
$X^2=I$ and $X'AX=A'$, the transpose of $A$. The existence of such
$X$ had first been obtained by Gow \cite{RG2} for $A\in\GL_n(F)$,
and then by Yip and Ballentine \cite{YB} for $A\in M_n(F)$. Again,
the missing link was Gabriel's decomposition. We remark that
\cite{YB} is a continuation of prior work of Yip and Ballentine
\cite{YB2} on the equivalence of bilinear forms. A short proof of
the existence of $X$ for any $A\in M_n(F)$ was given by Horn and
Sergeichuk \cite{HS} using the study made by Sergeichuk \cite{VS}
of the classification of bilinear forms. An algorithmic view of
Gabriel's decomposition is given by Horn and Sergeichuck in
\cite{HS2}.

Let us consider next the case of non-degenerate bilinear forms
and, more generally, the case of a non-degenerate sesquilinear
form $f:V\times V\to D$ defined over a finite dimensional vector
space $V$ over a division ring $D$ with involution~$J$. Then $f$
admits a unique asymmetry $\si\in\GL(V)$, satisfying
$$
f(v,u)^J=f(u,\si v),\quad u,v\in V,
$$
which exerts considerable influence on $f$. Wall \cite{W} studied
the conjugacy problem in classical groups over division rings,
which he reduced to the classification problem of non-degenerate
sesquilinear forms, which he further reduced to the classification
of non-degenerate hermitian forms, by means of $\si$, although his
results are complete only in characteristic not 2. Wall's work
extends to division rings prior and decisive work by Williamson
\cite{W1}, \cite{W2}, \cite{W3}, written exclusively in matrix
form, over fields of characteristic not 2. Riehm \cite{R}, making
an alternative use of~$\si$, considers the equivalence problem of
non-degenerate bilinear forms over fields, and succeeds in
obtaining a reduction to a classical problem in all cases,
resorting to symmetric and quadratic forms in characteristic 2. A
continuation of Riehm's work, in collaboration with
Shrader-Frechette \cite{RF}, deals with the classification of
sesquilinear forms over a semisimple artinian ring with an
anti-automorphism, not necessarily involutive. Another point of
view of the classification problem of bilinear forms, also using
$\si$, is offered by Scharlau \cite{RS}. An entirely different
approach, by means of quivers and their representations, was used
by Sergeichuk in \cite{VS}, \cite{VS2} in characteristic not 2,
and through direct matrix computations \cite{VS3} in
characteristic 2.

In addition to having necessary and sufficient conditions for two
bilinear forms to be equivalent, it is desirable to have a list of
representatives for matrices under congruence, as a direct sum of
carefully selected indecomposable matrices. For an algebraically
closed field of characteristic not 2, this was done by Corbas and
Williams \cite{CW}. Much simpler representatives where obtained by
Horn and Sergeichuk \cite{HS} over $\Ci$, by simplifying prior
work by Sergeichuck \cite{VS}. Horn and Sergeichuk later extended
their work and produced a list of matrix representatives under
congruence over an arbitrary algebraically closed field
\cite{HS3}. Their canonical forms have already found various
applications; see \cite{ACS}, \cite{S}, \cite{D}, \cite{DZ}, for
instance.

Let us now outline our own approach to solve the classification
problem of split forms and to produce split normal forms over a
quadratically closed field. Essentially, we first find all
possible classes of indecomposable split forms, including suitable
Gram matrices for them, and then determine in what sense an
arbitrary split form decomposes uniquely in terms of these
components. Uniqueness becomes an issue only in characteristic 2.

By means of Gabriel's decomposition we may reduce to the case when
the split form $f:V\times V\to F$ is non-degenerate. Let
$\si\in\GL(V)$ be the asymmetry of~$f$ and view $V$ as an
$F[X]$-module via $\si$. Following Riehm, it is rather easy to
restrict attention to the case when $\si$ has only one eigenvalue,
namely 1 or -1. Thus the minimal polynomial of $\si$ is $p^r$,
where $r\geq 1$ and $p=X\pm 1$. A second easy simplification by
Riehm allows us to write
$$
V=V_1\perp\cdots\perp V_r,
$$
where each $V_m$, $1\leq m\leq r$, is a free $F[X]/(p^m)$-module.
More importantly, perhaps, is the fact, also shown by Riehm, that
while these summands are not unique, the equivalence class of the
restriction of $f$ to each $V_m$ is determined by that of $f$.
Details can be found in \S\ref{sec:riehm}.

We may thus focus on the case $V=V_m$, when all elementary
divisors of $\si$ are equal to $p^m$ for a fixed $m\geq 1$ and
$p=X\pm 1$. This is done in \S\ref{sec:oneortwo}, \S\ref{sec:one}
and \S\ref{sec:two}.

In \S\ref{sec:oneortwo} we take a closer look at the
non-degenerate bilinear $\widehat{f}$ on $V/pV$, also considered
by Riehm, and defined by
$$
\widehat{f}(u+pV,v+pV)=f(p^{m-1}u,v),\quad u,v\in V.
$$
It is easily seen to have scalar asymmetry given by $(-1)^{m-1}$
if $p=X-1$ and $(-1)^{m}$ if $p=X+1$. We show in
\S\ref{sec:oneortwo} that the asymmetry of an indecomposable
component $g$ of $f$ has two elementary divisors if $\widehat{g}$
is alternating and one elementary divisor otherwise. Notice that
if $\chr(F)\neq 2$ then this dichotomy is entirely determined by
$p$ and the parity of $m$. Thus, if $\chr(F)\neq 2$, then
$\widehat{f}$ is alternating if and only if $p=X-1$ and $m$ is
even, or $p=X+1$ and $m$ is odd. If $\chr(F)=2$ and $m$ is even an
old argument of Wall \cite{W} ensures that $\widehat{f}$ is
alternating. However, if $\chr(F)=2$ and $m$ is odd, then a
non-alternating $\widehat{f}$ may have alternating and
non-alternating components. Nonetheless, since a non-alternating
symmetric bilinear form can always be diagonalized \cite{K}, we
can always choose these components to be non-alternating. Thus, as
long as we can prove, for $F$ quadratically closed, that every
indecomposable component of $f$ is determined by its own
asymmetry, this will automatically imply the result for $f$,
provided we also know, in characteristic~2, the alternating or
non-alternating nature of the symmetric form on $V_m/(X-1)V_m$
associated to $f$, for every odd $m$ for which $V_m\neq 0$. It is
then an easy matter to produce matrix representatives for every
indecomposable bilinear form, and hence for every form.
A different problem altogether is to find matrix representatives that have a particularly
pleasant shape. This is discussed in \S\ref{sec:one}.

The fact that an indecomposable non-degenerate bilinear form $f$
whose asymmetry $\si$ has a single eigenvalue $\pm 1$ is
determined by its asymmetry is proven in \S\ref{sec:one} in the
non-alternating case, and in \S\ref{sec:two} in the alternating
case. These results, specially for $\widehat{f}$ non-alternating,
are considerably harder to prove than the other results of
 this paper.
In the alternating case, if $\chr(F)\neq 2$, no restrictions
are required from~$F$, while if $\chr(F)=2$ it suffices to assume
that~$F$ is quadratically closed, which also suffices in the
non-alternating case. The actual technical restrictions imposed on
$F$ are discussed in these sections.

We combine the above information to produce the desired classification and normal forms in
\S\ref{sec:main}.

\section{Preliminaries}
\label{sectermnot}

We fix throughout an arbitrary field $F$, a non-zero vector space
$V$ of finite dimension~$n$ over~$F$, and a bilinear form
$f:V\times V\to F$.

Two matrices $A,B\in M_n(F)$ are said to be congruent if there is
$X\in\GL_n(F)$ such that
$$B=X' A X,$$
where $X'$ stands for the transpose of $X$. In this case, we write
$A\sim B$.

If $\B=\{v_1,\dots,v_n\}$ is a basis of $V$ the Gram matrix $A\in
M_n(F)$ of $f$ relative to $\B$ is defined by $A_{ij}=f(v_i,v_j)$.
Notice that $A$ can be characterized as the only matrix in
$M_n(F)$ satisfying:
$$
[u]' A [v]=f(u,v),\quad u,v\in V,
$$
where $[u],[v]$ are the column vectors in $F^n$ formed by the
coordinates of $u,v$ with respect to $\B$. If
$\C=\{u_1,\dots,u_n\}$ is also basis of $V$, let $B$ stand for
corresponding Gram matrix of $f$. Let $X$ be the change of basis
matrix whose $i$th column is formed by the coordinates of $u_i$
with respect to $\B$. Then  $B=X' A X$. Thus, the Gram matrices of
$f$ with respect to different bases are congruent.

A bilinear form $g:W\times W\to F$ is said to be equivalent to $f$
if there exists a linear isomorphism $x:V\to W$ such that
$$
g(x u,x v)=f(u,v),\quad u,v\in V.
$$
In this case, if $\B$ be a basis of $V$, then $\C=x\B$ is a basis
of $W$, and the Gram matrices of $f$ and $g$ with respect to these
bases are identical. Conversely, if $g$ and $f$ admit the same
Gram matrices relative to bases $\B$ and $\C$ then $f$ and $g$ are
equivalent via the linear isomorphism that sends $\B$ onto $\C$.
Thus $f$ and $g$ are equivalent if and only if their Gram matrices
with respect to any bases are congruent.

For a subspace $U$ of $V$ we define
$$
L(U)=\{v\in V\, f(v,U)=0\},R(U)=\{v\in V\, f(U,v)=0\}\text{ and
}U^\perp=L(U)\cap R(U).
$$
The left and right radicals of $f$ are defined to be $L(V)$ and
$R(V)$, respectively, while the radical of $f$ is
$\mathrm{Rad}(V)=L(V)\cap R(V)=V^\perp$.

We say that $f$ is non-degenerate if $L(V)=0$. This equivalent to
$R(V)=0$. Both mean that $f$ admits an invertible Gram matrix.

%For instance:

%\noindent $\bullet$ All alternating forms defined on an $n$
%dimensional vector space have even rank (the rank of any of its
%Gram matrices) and alternating forms with the same rank, say $2m$,
%are equivalent. They admit as Gram matrix the direct sum of $m$
%copies of the $2\times 2$ matrix
%$$
%\left(%
%\begin{array}{cc}
%  0 & 1 \\
%  -1 & 0 \\
%\end{array}%
%\right).
%$$

%\noindent $\bullet$ Suppose $F$ is algebraically closed. If
%char$(F)\neq 2$ then all symmetric bilinear forms of the same rank
%defined on an $n$ dimensional $F$-vector space are equivalent. If
%char$(F)=2$ symmetric forms of the same rank and ``nature" are
%equivalent, where the nature is alternating or not.

Given subspaces $U_1$ and $U_2$ of $V$ we write
$$
V=U_1\perp U_2
$$
to mean that $V=U_1\oplus U_2$ and $f(U_1,U_2)=0=f(U_2,U_1)$. If
$V$ admits no decomposition $V=U_1\perp U_2$ except when $U_1=0$
or $U_2=0$ we say that $f$ is indecomposable.

Likewise, $A\in M_n(F)$ is said to be indecomposable under
congruence if $A$ is not congruent to the direct sum
$$
A=B\oplus C=\left(%
\begin{array}{cc}
  B & 0 \\
  0 & C \\
\end{array}%
\right)
$$
of two smaller square matrices $B$ and $C$.

Given $\lambda\in F$, let $J_n(\lambda)$ stand for the lower
triangular $\lambda$-Jordan block, and write $J_n$ for $J_n(0)$.
Thus
$$
J_1(\lambda)=(\lambda),\quad J_2(\lambda)=\left(\begin{matrix} \lambda & 0\\
1 & \lambda\end{matrix}\right),\quad J_3(\lambda)=\left(\begin{matrix} \lambda & 0 & 0\\
1 & \lambda & 0\\
0 & 1 & \lambda
\end{matrix}\right),...
$$

\section{Gabriel's theorem}
\label{sec:existence}

The following is Waterhouse's \cite{WW} formulation of Gabriel's
theorem \cite{G}, except that we have replaced their blocks by
0-Jordan cells.

\begin{theorem} \label{matr} (i) Any matrix $A\in M_n(F)$ is congruent to the direct sum of
finitely many 0-Jordan blocks and an invertible matrix~$C$ (we
allow, of course, for the possibility that one, but not both, of
these types of summands be absent).

(ii) The multiplicity of each 0-Jordan block appearing in such
decomposition and the congruence type of $C$ are uniquely
determined by $A$.

(iii) For each integer $r\ge 1$ there is a unique matrix in
$M_r(F)$, up to congruence, which is non-invertible and
indecomposable, namely the 0-Jordan block $J_r$.
\end{theorem}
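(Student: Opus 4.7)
The plan is to prove parts (i) and (iii) together by induction on $n = \dim V$, and then to derive (ii) by exhibiting congruence invariants of $A$ (equivalently, of $f$) that recover the multiplicities of each $J_r$ as well as the congruence class of the non-degenerate part $C$.

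For existence, I would induct on $n$. If $f$ is non-degenerate, set $C = A$ and stop. Otherwise, replacing $f$ by its transpose if necessary, assume $L(V) \neq 0$. If moreover $L(V) \cap R(V) \neq 0$, pick $0 \neq v$ in this intersection; then $f$ vanishes on $\mathrm{span}(v) \times V$ and on $V \times \mathrm{span}(v)$, so any vector-space complement $W$ of $\mathrm{span}(v)$ yields $V = \mathrm{span}(v) \perp W$ with Gram matrix $(0) = J_1$ on the first summand, and induction applied to $W$ completes the decomposition.

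The substantive case is $L(V) \neq 0$ but $L(V) \cap R(V) = 0$. I would build a chain $v_1, \ldots, v_r$ whose span supports a $J_r$-block: start with $0 \neq v_1 \in L(V)$, and while $v_k \notin R(V)$ pick $v_{k+1}$ with $f(v_{k+1}, v_k) = 1$; the process terminates with some $v_r \in R(V)$, and the $v_k$'s are easily seen to be linearly independent. A Gram--Schmidt-style correction, exploiting $v_1 \in L(V)$ and $v_r \in R(V)$, then converts the Gram matrix of $f$ on $U = \mathrm{span}(v_1, \ldots, v_r)$ to $J_r$. The hard step is producing an $f$-orthogonal complement $W$ of $U$: the conditions $f(v_1, W) = 0$ and $f(W, v_r) = 0$ are automatic, but the remaining $2r - 2$ one-sided orthogonality constraints cut out a subspace that must be shown to complement $U$ transversally. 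This requires choosing the chain carefully and verifying that the functionals $f(\cdot, v_i)$ for $i < r$ and $f(v_j, \cdot)$ for $j > 1$ are independent modulo those vanishing on $U$, followed by a dimension count. Induction on $\dim W$ then finishes (i), and (iii) is immediate: an indecomposable non-invertible $r \times r$ matrix must consist of a single summand in the decomposition produced by (i), hence be congruent to $J_r$.

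For uniqueness (ii), I would recover the multiplicities $m_r$ of each $J_r$ from congruence invariants of $f$. The base observation is $m_1 = \dim(L(V) \cap R(V))$. For $r \geq 2$, introduce ascending filtrations built from $L(V)$, $R(V)$, and the left/right adjoint maps associated with $f$ (for instance, iterated preimages of $R(V)$ under $v \mapsto f(\cdot, v)$), and match their dimension jumps against those forced by any presumed decomposition. The congruence class of the non-degenerate piece $C$ is then determined, as $C$ realizes the form induced on the complement of the sum of all $J_r$-summands. The main technical obstacle is the chain step in the substantive case of existence---constructing the orthogonal complement to the span of a Gabriel chain; everything else reduces to dimension counts, induction, or routine verification.
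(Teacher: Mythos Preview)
Your route to (i) is genuinely different from the paper's. The paper argues entirely in block-matrix form: after reducing to $A$ singular with zero radical, it writes
\[
A \sim \begin{pmatrix} 0 & 0 & 0 \\ I & 0 & 0 \\ 0 & u & B \end{pmatrix}
\]
with $B \in M_m(F)$ and $m$ chosen \emph{minimal}, applies induction to $B$, and in the one remaining subcase uses the minimality of $m$ to force a contradiction. There is no chain, no Gram--Schmidt, and no complement to construct; the minimality hypothesis does all the work.

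Your chain construction, as stated, has a real gap. You assert that starting from $v_1 \in L(V)$ and repeatedly choosing $v_{k+1}$ with $f(v_{k+1},v_k)=1$ produces a linearly independent sequence terminating with $v_r \in R(V)$. Neither claim is automatic. Take
\[
A = \begin{pmatrix} 0 & 0 \\ 1 & 1 \end{pmatrix},
\]
so $L(V)=\langle e_1\rangle$, $R(V)=\langle e_1-e_2\rangle$, and $L(V)\cap R(V)=0$. With $v_1=e_1$ the choice $v_2=e_2$ is perfectly legal, yet $v_2\notin R(V)$; the next step then demands $v_3$ with $f(v_3,e_2)=1$, and every such $v_3$ already lies in $\langle e_1,e_2\rangle$, so the process neither reaches $R(V)$ nor remains independent. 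A good choice ($v_2=e_2-e_1$) does exist here, but you have to \emph{specify} a selection rule---for instance, minimize the chain length over all $v_1\in L(V)$, or at each step take $v_{k+1}\in R(V)$ whenever possible---and then prove it always succeeds. You acknowledge ``choosing the chain carefully,'' but only in connection with building the complement $W$; the same care is already needed for termination, and without it the argument does not get off the ground. The paper's minimal-$m$ trick is exactly the device that replaces this delicate choice.

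Two smaller points. For (ii) the paper gives no argument, referring instead to \cite{DS}; your plan of recovering the $m_r$ from intrinsic filtrations is the standard idea and your formula $m_1=\dim(L(V)\cap R(V))$ is correct, but what you wrote for $r\ge 2$ is too vague to evaluate. For (iii), your deduction from (i) shows only that any non-invertible indecomposable matrix of size $r$ is congruent to $J_r$; it does not show that $J_r$ is itself indecomposable. The paper explicitly derives (iii) from (i) \emph{and} (ii), and indeed uniqueness is what rules out $J_r$ decomposing nontrivially.
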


Observe that part {\it (iii)} follows from parts {\it (i)} and
{\it (ii)}. We refer the reader to \cite{DS} for a short and
conceptual proof of part {\it (ii)}.

\subsection{Existence of the decomposition} Here we give a new and very short proof
of part {\it (i)} of Theorem \ref{matr}.

By induction on $n$. We may assume that $A$ is non-invertible with
zero radical and $n>2$. In this case, we easily see that
$$
A\sim \left(%
\begin{array}{ccc}
  0 & 0 & 0 \\
  I & 0 & 0 \\
  0 & u & B \\
\end{array}%
\right),
$$
where $B\in M_m(F)$, $u$ is a column vector in $F^m$, and $m$ is
chosen as small as possible subject to $1\leq m\leq n-2$.

If $m=1$ then either $A\sim J_n$ or $A\sim J_{n-1}\oplus J_1$.
Assume $m>1$.

If $B$ had an invertible component we could split it off and we
would be done by induction. In the other case, $B\sim
C=J_{k_1}\oplus\cdots\oplus J_{k_s}$, and we see that
$$
A\sim \left(%
\begin{array}{ccc}
  0 & 0 & 0 \\
  I & 0 & 0 \\
  0 & w & C \\
\end{array}%
\right),
$$
where $w=w_1+\cdots+w_s$, $w_i\in F^{k_i}$, and all entries of
$w_i$ after the first are 0.

If $s=1$ then $A\sim J_n$ or $A\sim J_{n-m}\oplus J_m$. Suppose
$s>1$. If the first entry of any $w_i$, $2\leq i\leq s$, is zero
we can split off one 0-Jordan block and be done by induction.
Suppose, if possible, that the first entry of $w_2$ is not 0. It
follows that
$$
A\sim \left(%
\begin{array}{ccc}
  0 & 0 & 0 \\
  I & 0 & 0 \\
  0 & z & D \\
\end{array}%
\right),
$$
where now $D\in M_{m-1}(F)$ and $z$ is a column vector in
$F^{m-1}$, a contradiction.

\section{The asymmetry of a non-degenerate bilinear form}
\label{sec:riehm}

We assume in this section that $f$ is non-degenerate and follow
\cite{R} very closely. The asymmetry of $f$ is the only
$\si\in\GL(V)$ satisfying
\begin{equation}
\label{defas} f(v,u)=f(u,\si v),\quad u,v\in V.
\end{equation}

It is immediately verified that equivalent non-degenerate bilinear
forms have similar asymmetries. The rest of the paper is devoted
to see up to what extent the converse is also true. That this
fails, in general, is seen by the existence of alternating and
non-alternating symmetric bilinear forms in characteristic 2.

\begin{lemma}\label{simit} The asymmetry $\si$ is similar
to its inverse $\si^{-1}$.
\end{lemma}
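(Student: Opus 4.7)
The plan is to translate the defining equation of $\sigma$ into a matrix identity and then read off the similarity directly.

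Fix any basis of $V$ and let $A \in \GL_n(F)$ be the Gram matrix of $f$, which is invertible since $f$ is non-degenerate. Let $S$ be the matrix of $\sigma$ in this basis. Writing the defining relation $f(v,u)=f(u,\sigma v)$ in coordinates gives
\[
[v]^T A [u] = [u]^T A S [v] = [v]^T (AS)^T [u] = [v]^T S^T A^T [u]
\]
for every $u,v \in V$, so $A = S^T A^T$, and therefore
\[
S = A^{-1} A^T, \qquad S^{-1} = A^{-T} A, \qquad S^T = A A^{-T}.
\]

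Next I would observe the direct computation
\[
A \, S^{-1} A^{-1} \;=\; A \cdot A^{-T} A \cdot A^{-1} \;=\; A A^{-T} \;=\; S^T,
\]
which exhibits $S^{-1}$ as being similar to $S^T$ via the matrix $A$. Combining this with the standard fact that every square matrix over a field is similar to its own transpose (e.g.\ via equality of rational canonical forms, or equivalently via the Smith normal forms of $xI-S$ and $xI-S^T=(xI-S)^T$), we obtain $S \sim S^T \sim S^{-1}$, which is the required similarity of $\sigma$ and $\sigma^{-1}$.

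There is really no serious obstacle: the only nontrivial input is that a matrix is similar to its transpose, which is classical. One could also give a coordinate-free version by introducing the two isomorphisms $\varphi,\psi: V \to V^*$ defined by $\varphi(v)(u)=f(u,v)$ and $\psi(v)(u)=f(v,u)$, noting that $\sigma = \varphi^{-1}\psi$ while the dual map $\sigma^{*}$ corresponds under these identifications to $\sigma^{-1}$, but the matrix argument above is quicker and self-contained.
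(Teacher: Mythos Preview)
Your proof is correct and follows essentially the same route as the paper's own argument: both pass to a Gram matrix $A$ and the matrix $S$ of $\sigma$, obtain the relation $S=A^{-1}A'$ (equivalently $A=S'A'$), and conjugate to show that $S^{-1}$ is similar to $S'$ (you conjugate by $A$, the paper by $A'$, which is the same computation transposed), then invoke the classical fact that a square matrix is similar to its transpose.
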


\begin{proof} Let $\B$ be a basis of $V$, and let $A$ and $S$ be the
corresponding matrices of $f$ and~$\si$. Then
$$
A'=AS,\quad S=A^{-1}A'.
$$
Therefore
$$
S^{-1}=(A')^{-1}A=(A')^{-1}S'A'.
$$
Since a square matrix is always similar to its transpose, the
result follows.
\end{proof}

\begin{lemma}\label{desq} Suppose $U$ is a $\si$-invariant subspace of $V$ such that $f_U$
is non-degenerate. Then
$$
V=U\perp U^\perp.
$$
\end{lemma}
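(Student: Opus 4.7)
The plan is to exploit the $\sigma$-invariance of $U$ to collapse the two one-sided annihilators into a single subspace and then use a dimension count.

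First, I would prove the key observation $L(U)=R(U)$. From the defining identity $f(v,u)=f(u,\sigma v)$, swapping the roles of the dummy variables gives $f(u,v)=f(v,\sigma u)$ for all $u,v\in V$. Fixing $v\in V$ and letting $u$ range over $U$, the element $v$ lies in $R(U)$ iff $f(u,v)=0$ for every $u\in U$, iff $f(v,\sigma u)=0$ for every $u\in U$. Because $\sigma$ is invertible and $U$ is $\sigma$-invariant, $\sigma U=U$, so this last condition is simply $v\in L(U)$. Hence $L(U)=R(U)$, and therefore $U^\perp=L(U)=R(U)$.

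Second, I would verify $U\cap U^\perp=0$. Since $f_U$ is non-degenerate, its left radical inside $U$ vanishes; but $U\cap L(U)$ is exactly that left radical, so $U\cap U^\perp=U\cap L(U)=0$.

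Third, a dimension count closes the argument. Because $f$ is non-degenerate on $V$, the map $V\to U^*$ sending $v$ to $f(v,\cdot)|_U$ is surjective with kernel $L(U)$, so $\dim L(U)=n-\dim U$. Combined with $U\cap U^\perp=0$ and $U^\perp=L(U)$, this yields $V=U\oplus U^\perp$. Orthogonality in both directions is automatic from $U^\perp=L(U)=R(U)$, giving $V=U\perp U^\perp$.

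There is no real obstacle here; the only subtlety is the identification $L(U)=R(U)$, which is an immediate consequence of the defining relation for $\sigma$ together with $\sigma U=U$. Everything else is a routine application of non-degeneracy and dimension counting.
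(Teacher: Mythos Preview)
Your proof is correct and follows essentially the same path as the paper's: identify $L(U)=R(U)$ via the defining relation for $\sigma$ together with $\sigma U=U$, use non-degeneracy of $f$ for the dimension count $\dim U^\perp=\dim V-\dim U$, and use non-degeneracy of $f_U$ for $U\cap U^\perp=0$. The only difference is the ordering of the last two steps and a slightly more explicit justification of the dimension formula via the surjection $V\to U^*$.
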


\begin{proof} We have
$$
f(U,v)=0\Leftrightarrow f(v,\si U)=0\Leftrightarrow f(v,U)=0,
$$
so
$$
L(U)=U^\perp=R(U).
$$
Since $f$ is non-degenerate, we infer
$$
\dim(U^\perp)=\dim(V)-\dim(U).
$$
From the fact that $f_U$ is non-degenerate, we deduce
$$
U\cap U^\perp=0.
$$
Combining the above we obtain the desired result.
\end{proof}

\begin{lemma}
\label{lem21a} If $v,w\in V$ then $f(\sigma v,\sigma w)=f(v,w)$.
\end{lemma}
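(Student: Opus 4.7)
The plan is to apply the defining relation (\ref{defas}) twice and chain the two equalities. Rewriting (\ref{defas}) with dummy variables, we have $f(x,y) = f(y,\sigma x)$ for all $x,y \in V$.

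First I would apply this with $x = v$ and $y = w$ to obtain
$$
f(v,w) = f(w,\sigma v).
$$
Then I would apply it again with $x = w$ and $y = \sigma v$ to obtain
$$
f(w,\sigma v) = f(\sigma v,\sigma w).
$$
Combining these two equalities yields $f(v,w) = f(\sigma v,\sigma w)$, which is the claim.

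There is no real obstacle here: the result is essentially a syntactic consequence of the definition of $\sigma$, and no invertibility or non-degeneracy argument beyond what is already built into (\ref{defas}) is needed. Conceptually, the lemma says that $\sigma$ is an isometry of $f$, which is natural in view of Lemma \ref{simit}: iterating the defining identity shows $\sigma$ preserves $f$, and indeed one could alternatively derive this at the matrix level from $A' = AS$ by computing $S'AS = S'A' = (AS)'S^{-1} \cdot S = A' \cdot \text{(etc.)}$, but the two-step substitution above is cleaner.
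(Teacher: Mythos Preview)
Your proof is correct and is essentially identical to the paper's: both apply (\ref{defas}) twice, chaining $f(v,w)=f(w,\sigma v)=f(\sigma v,\sigma w)$, just written in the opposite order. The extra commentary about a matrix-level derivation is unnecessary but harmless.
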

\begin{proof} By (\ref{defas}), we have
$$
f(\si v,\si w)=f(w,\si v)=f(v,w).
$$
\end{proof}
It follows from Lemma \ref{lem21a} that
$$
f(\si v,w)=f(v,\si^{-1} w),\quad v,w\in V
$$
and, more generally,
$$ f(p(\si)v,w)=f(v,p(\si^{-1}) w),\quad q\in
F[X], v,w\in V.
$$

For $0\neq p\in F[X]$, let $p^* \in F[X]$ be its adjoint
polynomial, defined by
$$
p^*(X)=X^{\mathrm{deg} (p)}p(1/X).
$$
Thus, if $0\neq p\in F[X]$ has degree $k$, then
$$
 f(p(\si)v,w)=f(v,\si^{-k} p^*(\si) w),\quad v,w\in
V.
$$
This will be repeatedly and implicitly used throughout the paper.

We will view $V$ as an $F[X]$-module via $\si$. Let $p_\si\in
F[X]$ stand for the minimal polynomial of $\si$. As we only
concerned with split forms, we assume for the remainder of this
section that $p_\si$ splits over $F$. Note that by Lemma
\ref{simit}, if $\lambda\in F$ is an eigenvalue of $\si$, then so
is $\lambda^{-1}$.

For each $\lambda\in F$ let $V_\lambda$ denote the corresponding
generalized eigenspace of $\si$, i.e.,
$$
V_\lambda=\{v\in V\,|\, (X-\lambda)^m v=0\text{ for some }m\geq
1\}.
$$
Thus, if $\lambda_1,\dots,\lambda_t$ are the distinct eigenvalues
of $\si$ in $F$, then
\begin{equation}
\label{funda4} V=V_{\lambda_1}\oplus\cdots\oplus V_{\lambda_t}.
\end{equation}

In this regard, we have the following two results from \cite{R}.

\begin{lemma}
\label{lem83} If $\lambda,\mu\in F$ satisfy $\lambda\mu\neq 1$
then $V_\lambda$ and $V_\mu$ are orthogonal, i.e.,
$f(V_\lambda,V_\mu)=f(V_\mu,V_\lambda)=0$.
\end{lemma}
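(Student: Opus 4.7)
The plan is to exploit the adjoint-polynomial identity
$$f(p(\sigma)v,w)=f(v,\sigma^{-k}p^*(\sigma)w),\quad k=\deg(p),$$
displayed just above the lemma, together with the observation that $V_\lambda$ is precisely the kernel of some power of $(X-\lambda)$. Pick $v\in V_\lambda$ and $w\in V_\mu$. Choose $m\ge 1$ with $(\sigma-\lambda)^m v=0$, and let $p(X)=(X-\lambda)^m$. A short computation gives
$$p^*(X)=X^m(1/X-\lambda)^m=(1-\lambda X)^m,$$
so the adjoint identity yields
$$0=f(p(\sigma)v,w)=f(v,\sigma^{-m}(1-\lambda\sigma)^m w).$$

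The next step is to show that the operator $\sigma^{-m}(1-\lambda\sigma)^m$ maps $V_\mu$ onto itself. Since $V_\mu$ is $\sigma$-invariant and $\sigma$ is invertible with $\mu\ne 0$, $\sigma^{-m}$ acts as a bijection on $V_\mu$. Moreover the restriction $\sigma|_{V_\mu}$ has $\mu$ as its only eigenvalue, so the only eigenvalue of $(1-\lambda\sigma)|_{V_\mu}$ is $1-\lambda\mu$; the hypothesis $\lambda\mu\ne 1$ makes this nonzero, hence $(1-\lambda\sigma)^m$ is invertible on $V_\mu$. Consequently $\sigma^{-m}(1-\lambda\sigma)^m V_\mu=V_\mu$, and the displayed equation forces $f(v,V_\mu)=0$. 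This establishes $f(V_\lambda,V_\mu)=0$.

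The reverse orthogonality $f(V_\mu,V_\lambda)=0$ is obtained by the symmetric argument: choose $\ell\ge 1$ with $(\sigma-\mu)^\ell w=0$, apply the adjoint identity to $q(X)=(X-\mu)^\ell$ with $q^*(X)=(1-\mu X)^\ell$, and note that $1-\mu\lambda\ne 0$ makes $\sigma^{-\ell}(1-\mu\sigma)^\ell$ invertible on $V_\lambda$. The condition $\lambda\mu\ne 1$ is symmetric in $\lambda,\mu$, so nothing new is needed.

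There is no real obstacle here; the only point that needs a moment of care is verifying invertibility of $(1-\lambda\sigma)^m$ on the \emph{generalized} eigenspace $V_\mu$ (as opposed to just the eigenspace), which is handled by the standard fact that a polynomial in an operator whose only eigenvalue is $\mu$ has only the eigenvalue obtained by evaluation at $\mu$. The splitting hypothesis on $p_\sigma$ is not used for this lemma itself, but is what guarantees the decomposition~(\ref{funda4}) in which the lemma is meant to be applied.
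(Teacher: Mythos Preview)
Your proof is correct. The paper itself does not supply a proof of this lemma, instead citing Riehm~\cite{R}; your argument is precisely the natural one using the adjoint-polynomial identity that the paper sets up just before the statement, and it is essentially the standard proof. One small economy you could note: once $f(V_\lambda,V_\mu)=0$ is established, the reverse orthogonality follows immediately from the defining relation $f(w,v)=f(v,\sigma w)$ and $\sigma V_\mu=V_\mu$, so the symmetric repetition is not strictly necessary.
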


It follows from (\ref{funda4}) and Lemma \ref{lem83} that $V$ is
the orthogonal direct sum of non-degenerate bilinear spaces two
types: $V_\lambda$ and $V_\lambda\oplus V_{\lambda^{-1}}$, where
$\lambda^2=1$ in the first case, and $\lambda^2\neq 1$ in the
second.

\begin{lemma}\label{caca} Suppose $V=V_\lambda\oplus V_{\lambda^{-1}}$. Then the
equivalence type of $f$ is determined by the similarity type of
its asymmetry $\si$. In matrix terms, there is a basis of $V$
relative to which the Gram matrix of $f$ is
$$
A=\left(%
\begin{array}{cc}
  0 & B \\
  I & 0 \\
\end{array}%
\right),
$$
where $B$ is the matrix of $\si$ restricted to $V_\lambda$ or
$V_{\lambda^{-1}}$ with respect to some basis, these choices
yielding congruent matrices. In particular, we can choose $B$ to
be the direct sum of Jordan blocks with the same eigenvalue,
either $\lambda$ or $\lambda^{-1}$.
\end{lemma}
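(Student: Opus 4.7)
The plan is to recognize $V_\la$ and $V_{\la^{-1}}$ as a complementary pair of totally isotropic subspaces of $V$, construct the desired Gram matrix from dual bases, and then read off both the normal form and the similarity-implies-equivalence claim.

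First I would apply Lemma \ref{lem83} with $\m=\la$: since $\la^2\neq 1$, the pairing $f$ vanishes identically on $V_\la\times V_\la$, and by the same token on $V_{\la^{-1}}\times V_{\la^{-1}}$, so both generalized eigenspaces are totally isotropic. Combined with the non-degeneracy of $f$ on $V=V_\la\oplus V_{\la^{-1}}$, this forces the induced pairing $V_\la\times V_{\la^{-1}}\to F$ to be a perfect duality (any $v\in V_\la$ pairing trivially with $V_{\la^{-1}}$ would lie in $L(V)=0$); in particular $\dm V_\la=\dm V_{\la^{-1}}=k$.

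Next I would make the basis choice dictated by the target shape. Given any prescribed realization $B$ of $\si|_{V_\la}$ (for instance its Jordan canonical form with eigenvalue $\la$), I would exploit the fact that every square matrix is similar to its transpose to pick a basis $\{v_1,\dots,v_k\}$ of $V_\la$ in which $\si|_{V_\la}$ is represented by $B'$. Let $\{u_1,\dots,u_k\}$ be the basis of $V_{\la^{-1}}$ dual to $\{v_i\}$ under the pairing, so $f(u_i,v_j)=\delta_{ij}$. Ordering the full basis as $(v_1,\dots,v_k,u_1,\dots,u_k)$, the diagonal blocks of the Gram matrix vanish by isotropy and the bottom-left block is $I$ by duality, while a direct computation using $f(v_i,u_j)=f(u_j,\si v_i)$ and expanding $\si v_i$ in the $v$-basis shows that the top-right block is the transpose of the matrix of $\si|_{V_\la}$, which by construction equals $B$.

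The parameterization by $V_{\la^{-1}}$ is handled by the symmetric argument starting from a basis of $V_{\la^{-1}}$ that realizes $\si|_{V_{\la^{-1}}}$, and the two resulting Gram matrices are congruent without any further effort because they both represent the same form $f$. Finally, the equivalence type of $f$ is determined by the similarity type of $\si$ because the Gram matrix just constructed depends, up to congruence, only on the similarity class of $\si|_{V_\la}$, which is itself extracted from the similarity class of $\si$ via the decomposition $V=V_\la\oplus V_{\la^{-1}}$. The only delicate point I anticipate is the transpose bookkeeping in identifying the top-right block: making $B$ appear rather than $B'$ requires the initial similarity-to-transpose adjustment of the $v$-basis, and one must be careful to track which basis realizes which matrix.
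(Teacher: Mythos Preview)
Your argument is correct and follows essentially the same route as the paper: use Lemma~\ref{lem83} to see that $V_\la$ and $V_{\la^{-1}}$ are totally isotropic and in perfect duality under $f$, pick a basis of $V_\la$ and its dual basis in $V_{\la^{-1}}$, and read off the Gram matrix. The only cosmetic difference is in the transpose bookkeeping: the paper simply lets $B$ be the matrix of $\si|_{V_\la}$ in the chosen basis, obtains $\left(\begin{smallmatrix}0&B'\\I&0\end{smallmatrix}\right)$, and then invokes $B\sim B'$ at the end; you instead pre-adjust the basis so that $\si|_{V_\la}$ is represented by $B'$, making $B$ appear directly in the top-right block. Either way works.
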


\begin{proof} Let $\B=\{v_1,\dots,v_m\}$ be a basis of $V_\lambda$
and let $B$ the matrix of the restriction of $\si$ to $V_\lambda$
relative to $\B$. By Lemma \ref{lem83} the linear maps
$V_\lambda\to  V_{\lambda^{-1}}^* $ and $V_{\lambda^{-1}}\to
V_{\lambda}^* $ induced by $f$ are isomorphisms. In particular,
there exists a basis $\C=\{w_1,\dots,w_m\}$ of $V_{\lambda^{-1}}$
such that $f(w_i,v_j)=\delta_{ij}$. Then
$\{v_1,\dots,v_m,w_1,\dots,w_m\}$ is a basis of $V$ and relative
to this basis the Gram matrix of $f$ is
$$
A=\left(%
\begin{array}{cc}
  0 & B' \\
  I & 0 \\
\end{array}%
\right).
$$
Since $B'$ is similar to $B$ and the roles of $V_\lambda$ and
$V_{\lambda^{-1}}$ are interchangeable, the result follows.
\end{proof}

\begin{lemma}\label{grama} Let $\lambda\in F$ be an eigenvalue of $\si$ such
that $\lambda^2\neq 1$. Then $f$ is indecomposable if and only if
the elementary divisors of $\si$ are
$(X-\lambda)^m,(X-\lambda^{-1})^m$, where $2m=n$. Moreover, in
that case $f$ admits both Gram matrices
$$
\left(%
\begin{array}{cc}
  0 & J_m(\lambda) \\
  I & 0 \\
\end{array}%
\right),\,\left(%
\begin{array}{cc}
  0 & J_m(\lambda^{-1}) \\
  I & 0 \\
\end{array}%
\right).
$$
\end{lemma}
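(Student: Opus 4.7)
The plan is to apply Lemmas \ref{desq} and \ref{caca} to reduce the question to an analysis of the $F[X]$-module structure of $V_\la$. Since $\la^2\neq 1$, the subspace $V_\la\oplus V_{\la^{-1}}$ is $\si$-invariant, and by Lemma \ref{lem83} combined with the non-degeneracy of $f$ the restriction of $f$ to this subspace is non-degenerate (the pairing $V_\la\times V_{\la^{-1}}\to F$ is perfect, since all other generalized eigenspaces pair trivially with it). Lemma \ref{desq} then yields an orthogonal decomposition $V=(V_\la\oplus V_{\la^{-1}})\perp W$, and indecomposability of $f$ forces $W=0$, so $V=V_\la\oplus V_{\la^{-1}}$ and $\dim V_\la=\dim V_{\la^{-1}}=n/2$.

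Next I would show that $f$ is indecomposable if and only if $V_\la$ is cyclic as an $F[X]$-module. For the ``only if'' direction, suppose $V_\la=M_1\oplus M_2$ is a nontrivial decomposition into $F[\si]$-submodules. Pick a basis of $V_\la$ adapted to this splitting; the dual basis of $V_{\la^{-1}}$ provided by Lemma \ref{caca} splits correspondingly into $N_1\oplus N_2$, and the block structure of $\si|_{V_\la}$ forces the matrix of $\si|_{V_{\la^{-1}}}$ in this dual basis to be block-diagonal as well, so that $V=(M_1\oplus N_1)\perp(M_2\oplus N_2)$ is a nontrivial orthogonal decomposition of $V$, contradicting indecomposability. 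Hence $V_\la$ is an indecomposable $F[X]$-module, and therefore cyclic with a single elementary divisor $(X-\la)^m$ where $2m=n$. A short direct computation using the dual basis then shows that $\si|_{V_{\la^{-1}}}$ has elementary divisor $(X-\la^{-1})^m$.

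For the converse, suppose $V=U_1\perp U_2$ with both $U_i$ nonzero. Non-degeneracy of $f$ and dimension count give $U_1=R(U_2)$, and combined with the asymmetry identity $f(u,\si v)=f(v,u)$ this forces $\si U_1\subseteq U_1$, so each $U_i$ is $\si$-invariant and hence $U_i=(U_i\cap V_\la)\oplus(U_i\cap V_{\la^{-1}})$. Since $V_\la$ is cyclic over the local ring $F[X]/(X-\la)^m$, it is $F[X]$-indecomposable, so without loss of generality $U_1\cap V_\la=0$, giving $U_1\subseteq V_{\la^{-1}}$; but $f(V_{\la^{-1}},V_{\la^{-1}})=0$ by Lemma \ref{lem83} (as $\la^{-2}\neq 1$), contradicting non-degeneracy of the restriction of $f$ to $U_1$. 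I expect this converse to be the main obstacle, as it requires translating an abstract orthogonal splitting back into the $F[X]$-module structure via $\si$-invariance. The Gram matrix assertion then falls out of Lemma \ref{caca}: choose a basis of $V_\la$ in which $\si|_{V_\la}$ is represented by the upper-triangular Jordan block $J_m(\la)'$, so that $B'=J_m(\la)$; interchanging the roles of $V_\la$ and $V_{\la^{-1}}$ produces the second normal form.
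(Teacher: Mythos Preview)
Your argument is correct, and it uses the same toolbox as the paper (Lemmas~\ref{lem83}, \ref{desq}, \ref{caca}), but the two proofs are organized differently.

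For the forward direction, the paper is constructive: it picks a $\si$-cyclic vector $v\in V_\la$ of maximal order, finds $w\in V_{\la^{-1}}$ with $f((X-\la)^{m-1}v,w)\neq 0$, checks that $F[X]v\oplus F[X]w$ carries a non-degenerate restriction of $f$, and then invokes Lemma~\ref{desq} plus indecomposability to conclude $V=F[X]v\oplus F[X]w$. Your route is the contrapositive: any nontrivial $F[X]$-decomposition of $V_\la$ induces, via the dual basis of Lemma~\ref{caca} and the fact that the matrix of $\si$ on $V_{\la^{-1}}$ in that basis is $(B')^{-1}$, a nontrivial orthogonal decomposition of $V$. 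This is a perfectly good argument; it trades the explicit construction for a module-theoretic reduction.

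For the converse you work harder than necessary. The paper dispatches it in one line via Lemma~\ref{simit}: any orthogonal summand $U_i$ carries its own asymmetry $\si|_{U_i}$, which is similar to its inverse, so its elementary divisors must occur in reciprocal pairs $(X-\mu)^k,(X-\mu^{-1})^k$; since the full list $\{(X-\la)^m,(X-\la^{-1})^m\}$ cannot be split nontrivially into two such sublists, one $U_i$ is zero. Your argument (showing $\si$-invariance of the $U_i$, then using cyclicity of $V_\la$ to force $U_1\subseteq V_{\la^{-1}}$, then isotropy) is correct but longer. So the converse is not the main obstacle after all.
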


\begin{proof} Suppose first $f$ is indecomposable.
Let $(X-\lambda)^m$ be the highest power of
$X-\lambda$ present in $p_\si$. Since $\si$ is similar to
$\si^{-1}$, we see that $(X-\lambda^{-1})^m$ is the highest power
of $X-\lambda^{-1}$ present in $p_\si$.

There exists a vector $v\in V$ having $\si$-minimal polynomial
$(X-\lambda)^m$. Since $(X-\lambda)^{m-1} v\neq 0$, Lemma
\ref{lem83} ensures the existence of $w\in V_{\lambda^{-1}}$ such
that
$$
f((X-\lambda)^{m-1}v,w)\neq 0.
$$
Clearly $w$ has $\si$-minimal polynomial $(X-\lambda^{-1})^m$. Let
$S=F[X]v$, $T=F[X]w$ and $U=S\oplus T$. We claim that $U=V$. By
Lemma \ref{desq}, since $f$ is indecomposable, it suffices to
verify that $f_U$ is non-degenerate. By Lemma \ref{lem83}, we have
$$
f(S,S)=0=f(T,T).
$$
Moreover, the induced linear map $S\to T^*$ is clearly injective,
and hence bijective. Let $t\in T$ and suppose $f(t,S)=0$. Then
$f(s,\si t)=0$ for all $s\in S$. Thus $\alpha(\si t)=0$ for all
$\alpha\in T^*$, so $\si t=0$, whence $t=0$. Thus $T\to S^*$ is
also a linear isomorphism. This shows that $f_U$ is non-degenerate
and proves the claim.

The converse is obvious from Lemma \ref{simit}, while the last
assertion follows from Lemma \ref{caca}.
\end{proof}

\section{The associated symmetric or alternating bilinear form}
\label{sec:oneortwo}

We assume in this section that $f$ is non-degenerate with
asymmetry $\si$, whose only eigenvalues are $\pm 1$. Thus the
minimal polynomial of $\si$ is $p^r$, $p=X\pm 1$, for some $r\geq
1$. According to \cite{R}, we have
$$
V=V_1\perp\cdots\perp V_r,
$$
where each $V_m$, $1\leq m\leq r$, is a free $F[X]/(p^m)$-module,
not uniquely determined by $\si$ unless $r=1$. Let
$$
V(m)=\{v\in V\,|\, p^m v=0\},\quad 0\leq m\leq r.
$$
It is easy to verify -see \cite{R} for details- that
$$
V_m/p V_m\cong V(m)/(V(m+1)+pV(m-1)),\quad 1\leq m\leq r,
$$
via the linear isomorphism
$$
v+pV_m\mapsto v+V(m+1)+pV(m-1).
$$
In particular, the restriction of $f$ to each $V_m$, $1\leq m\leq
r$, is a non-degenerate bilinear whose equivalence class is
uniquely determined by $f$.

The next result is also easy to verify and can be found in
\cite{R}.

\begin{lemma}\label{gorro} Suppose $V=V_m$. Then the bilinear form $\widehat{f}$
on $V/p V$ given by
$$
\widehat{f}(u+pV,v+pV)=f(p^{m-1}u,v),\quad u,v\in V
$$
is non-degenerate with scalar asymmetry given by $(-1)^{m-1}$ if
$p=X-1$, and $(-1)^{m}$ if $p=X+1$.
\end{lemma}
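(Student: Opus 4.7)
The plan is to verify, in order, that $\widehat{f}$ is well-defined on $V/pV$, that it is non-degenerate, and that its asymmetry is the claimed scalar. The first two steps are short module-theoretic checks; all the actual work lies in the third, which I would carry out using the adjoint-polynomial formula $f(q(\si)v,w)=f(v,\si^{-\deg q}q^*(\si)w)$ together with the elementary observation that $\si$ acts on $V/pV$ as the scalar $\la$, where $p=X-\la$ and $\la=\pm 1$.

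For well-definedness, replacing $u$ by $u+pu'$ changes $f(p^{m-1}u,v)$ by $f(p^{m}u',v)$, which vanishes because $p^{m}$ annihilates $V$; replacing $v$ by $v+pv'$ changes it by $f(p^{m-1}u,pv')$, and moving the extra $p$ across via the adjoint formula (using that $p^{*}=-\la p$, hence a scalar multiple of $p$) again produces $p^{m}$ acting on something, hence zero. Non-degeneracy is equally quick: since $V$ is free over $R=F[X]/(p^{m})$, the annihilator of $p^{m-1}$ in $V$ is exactly $pV$, so $\widehat{f}(u+pV,\cdot)=0$ forces $f(p^{m-1}u,\cdot)=0$, hence $p^{m-1}u=0$ by non-degeneracy of $f$, and therefore $u\in pV$.

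For the asymmetry, I would begin with
$$
\widehat{f}(v+pV,u+pV)=f(p^{m-1}v,u)=(-\la)^{m-1}\,f\bigl(v,\si^{-(m-1)}p^{m-1}(\si)u\bigr),
$$
where the second equality uses the adjoint formula together with $(p^{m-1})^{*}=(-\la)^{m-1}p^{m-1}$, itself a consequence of $p^{*}=-\la p$ and $\la^{2}=1$. Since $\si$ commutes with $p(\si)$, the right-hand argument equals $p^{m-1}(\si)$ applied to $\si^{-(m-1)}u$, so one more application of the asymmetry of $f$ rewrites the whole expression as
$$
(-\la)^{m-1}\,\widehat{f}\bigl(\si^{-(m-1)}u+pV,\,\si v+pV\bigr).
$$
Using that $\si$ acts as $\la$ on $V/pV$ and that $\la^{-1}=\la$, this collapses to $(-\la)^{m-1}\la^{m}\,\widehat{f}(u+pV,v+pV)=(-1)^{m-1}\la\,\widehat{f}(u+pV,v+pV)$, and substituting $\la=1$ or $\la=-1$ recovers the stated scalars.

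The main obstacle, modest as it is, lies in keeping track of where the powers of $\si^{\pm 1}$ and $p(\si)$ live: the intermediate vector $\si^{-(m-1)}p^{m-1}(\si)u$ must be reinterpreted as $p^{m-1}(\si)$ applied to the lift $\si^{-(m-1)}u$ of an element of $V/pV$ before the definition of $\widehat{f}$ can be invoked. Once that identification is in place, the rest is routine bookkeeping with the identity $(p^{*})^{m-1}=(-\la)^{m-1}p^{m-1}$ and the fact that $\la^{2}=1$.
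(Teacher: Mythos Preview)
Your argument is correct. The well-definedness check in the second variable, the non-degeneracy argument via the annihilator of $p^{m-1}$ in the free $F[X]/(p^m)$-module $V$, and the asymmetry computation all go through as you describe; in particular the chain
\[
\widehat{f}(v+pV,u+pV)=(-\la)^{m-1}\widehat{f}(\si^{-(m-1)}u+pV,\si v+pV)=(-1)^{m-1}\la\,\widehat{f}(u+pV,v+pV)
\]
is exactly right once one uses that $\si$ acts as $\la$ on $V/pV$. Note that the paper does not actually supply a proof of this lemma: it simply says the result ``is also easy to verify and can be found in \cite{R}'', so there is no in-paper argument to compare against. Your direct verification via the adjoint-polynomial identity is precisely the kind of computation one would expect and fills in what the paper leaves to the reader.
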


\begin{lemma}
\label{uno} Suppose $f$ is indecomposable. Then

(i) $V$ is a free $F[X]/(p^m)$ module for some $m\geq 1$.

(ii) $\si$ has one elementary divisor if and only $\widehat{f}$ is
non-alternating.

(iii) $\si$ has two elementary divisors if and only $\widehat{f}$
is alternating.
\end{lemma}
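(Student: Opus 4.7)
The plan is to settle (i) first using Riehm's decomposition, then prove (ii) and (iii) simultaneously by exhibiting, in each case, a small non-degenerate $\si$-invariant subspace $U$ of $V$ and applying Lemma~\ref{desq} together with the indecomposability of $f$ to conclude $V=U$. For (i), the decomposition $V=V_1\perp\cdots\perp V_r$ stated at the start of this section is orthogonal, so indecomposability forces all but one summand to vanish; hence $V=V_m$ is free over $F[X]/(p^m)$ of some rank $k\geq 1$, $\si$ has $k$ elementary divisors all equal to $p^m$, and $\dim_F(V/pV)=k$. By Lemma~\ref{gorro}, $\widehat{f}$ is non-degenerate, so (ii) and (iii) together reduce to showing $k=1$ when $\widehat{f}$ is non-alternating and $k=2$ when $\widehat{f}$ is alternating.

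Suppose first $\widehat{f}$ is non-alternating. I would pick $u\in V$ with $\widehat{f}(u+pV,u+pV)=f(p^{m-1}u,u)\neq 0$. Then $u$ has $\si$-annihilator $(p^m)$, so $U=F[X]u\cong F[X]/(p^m)$ is cyclic of dimension $m$. The left radical of $f_U$ is $\si$-invariant (using $f(\si v,w)=f(v,\si^{-1}w)$ and the $\si$-stability of $U$), hence an $F[X]/(p^m)$-submodule of the cyclic module $U$, whose submodules form the totally ordered chain $0\subsetneq Fp^{m-1}u\subsetneq p^{m-2}U\subsetneq\cdots\subsetneq U$. A nonzero radical would contain the minimal term $Fp^{m-1}u$, forcing $f(p^{m-1}u,u)=0$, a contradiction. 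Thus $f_U$ is non-degenerate, $V=U\perp U^\perp$ by Lemma~\ref{desq}, and indecomposability gives $V=U$, so $k=1$.

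Suppose instead $\widehat{f}$ is alternating. Non-degeneracy forces $k$ even and $\geq 2$. I would choose $u_1\notin pV$ and, by non-degeneracy of $\widehat{f}$, some $u_2$ with $\widehat{f}(u_1+pV,u_2+pV)\neq 0$; alternation then forces $u_1+pV$ and $u_2+pV$ to be $F$-linearly independent. Since $V$ is free over the local ring $F[X]/(p^m)$, basis lifting (Nakayama) yields that $U=F[X]u_1\oplus F[X]u_2$ is a free rank-$2$ submodule, whose minimal submodules are the lines $F(\alpha p^{m-1}u_1+\beta p^{m-1}u_2)$ for $(\alpha,\beta)\in F^2\setminus\{0\}$. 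If $w=\alpha p^{m-1}u_1+\beta p^{m-1}u_2$ lay in the left radical of $f_U$, then, using $\widehat{f}(u_i+pV,u_i+pV)=0$, the equations $f(w,u_1)=0=f(w,u_2)$ would reduce to $\beta\widehat{f}(u_2+pV,u_1+pV)=0$ and $\alpha\widehat{f}(u_1+pV,u_2+pV)=0$, whence $\alpha=\beta=0$. So $f_U$ is non-degenerate, Lemma~\ref{desq} and indecomposability yield $V=U$, and $k=2$.

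The main obstacle is the alternating case: identifying $U$ as a free rank-$2$ summand via basis lifting, and verifying that the $2\times 2$ matrix with entries $f(p^{m-1}u_i,u_j)$ is non-singular. That last computation isolates the non-degeneracy of $\widehat{f}$ on the plane $F(u_1+pV)\oplus F(u_2+pV)$, which is precisely the structural reason that an alternating $\widehat{f}$ corresponds to rank $2$ rather than rank $1$.
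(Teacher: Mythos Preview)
Your argument is correct and follows essentially the same route as the paper: part (i) via the Riehm decomposition, part (ii) by taking $U=F[X]u$ for a non-isotropic $u$ and observing that the unique minimal submodule $Fp^{m-1}u$ is not in the left radical, and part (iii) by building a free rank-$2$ submodule from a hyperbolic pair for $\widehat{f}$ and checking non-degeneracy. The only cosmetic differences are that the paper selects its two generators from a pre-existing $F[X]/(p^m)$-basis (avoiding your Nakayama lift) and verifies non-degeneracy via the induced isomorphisms $S\to T^*$, $T\to S^*$ rather than your explicit socle computation.
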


\begin{proof} The first assertion follows immediately from the above discussion.

Suppose $\widehat{f}$ is non-alternating. Then there is $v\in V$
such that
$$
f(p^{m-1}v,v)\neq 0.
$$
Let $U=F[X]v$. Clearly the minimal polynomial of $v$ is $p^m$. The
left radical of $f_U$ is an $F[X]$-submodule of $U$. Since the
minimum $F[X]$-submodule of $U$ is not in the left radical, the
latter must be 0. Thus $f_U$ is non-degenerate. By Lemma
\ref{desq}, this implies $V=U\perp U^\perp$. But $f$ is
indecomposable, so $V=U$, which implies that $\si$ has a single
elementary divisor.

Suppose next $\widehat{f}$ is alternating. Let $v_1,\dots,v_k$ be
a basis for $V$ over $F[X]/(p^m)$. It yields an $F$-basis of $V/p
V$. Since $\widehat{f}$ is non-degenerate and
$\widehat{f}(v_1+pV,v_1+pV)=0$, there is $1<i\leq k$ such that
$\widehat{f}(v_1+pV,v_i+pV)=0$. Let $S=F[X]v_1$ and $T=F[X]v_i$.
Then $S\cap T=0$. Since $f(p^{m-1}v_1,v_i)\neq 0$, the induced
$F$-linear map $S\to T^*$ is an isomorphism. Therefore so is $T\to
S^*$, as in the proof of Lemma \ref{grama}. Suppose $s+t$ is in
the left radical of $S\oplus T$. Then $f(s,T)=0$ and $f(t,S)=0$,
which implies $s=0=t$. It follows from Lemma \ref{desq} that
$V=S\oplus T$.
\end{proof}

\begin{cor} Suppose $f$ is indecomposable and $\chr(F)\neq
2$. Then $f$ has two elementary divisors if and only if $p=X-1$
and $m$ is even, or $p=X+1$ and $m$ is odd.
\end{cor}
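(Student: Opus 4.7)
The plan is to combine Lemma \ref{uno} with Lemma \ref{gorro}, observing that in characteristic $\neq 2$ the alternating property of a non-degenerate bilinear form is controlled entirely by its scalar asymmetry.

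By Lemma \ref{uno}, indecomposability gives $V = V_m$ for some $m \geq 1$, and $\sigma$ has two elementary divisors if and only if $\widehat{f}$ is alternating. So the task reduces to determining, in characteristic not $2$, exactly when $\widehat{f}$ is alternating. By Lemma \ref{gorro}, $\widehat{f}$ has scalar asymmetry $\varepsilon$, where $\varepsilon = (-1)^{m-1}$ if $p = X-1$ and $\varepsilon = (-1)^m$ if $p = X+1$. This means $\widehat{f}(u+pV,v+pV) = \varepsilon \,\widehat{f}(v+pV,u+pV)$ for all $u,v \in V$.

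The key observation is that in characteristic $\neq 2$, a non-degenerate bilinear form with scalar asymmetry $\varepsilon$ is alternating if and only if $\varepsilon = -1$. Indeed, if $\varepsilon = -1$ then $\widehat{f}(u,u) = -\widehat{f}(u,u)$ forces $\widehat{f}(u,u) = 0$; and if $\varepsilon = 1$ then $\widehat{f}$ is symmetric and non-degenerate, which is incompatible with being alternating (polarizing $\widehat{f}(u+v,u+v) = 0$ would force $\widehat{f}$ to vanish identically, contradicting non-degeneracy).

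It remains to read off when $\varepsilon = -1$ in each case. For $p = X-1$, we need $(-1)^{m-1} = -1$, i.e., $m$ even. For $p = X+1$, we need $(-1)^m = -1$, i.e., $m$ odd. Thus $\widehat{f}$ is alternating precisely in the stated cases, and invoking Lemma \ref{uno} one last time completes the proof. I do not anticipate any real obstacle here; the argument is purely a parity bookkeeping on top of the two preceding lemmas.
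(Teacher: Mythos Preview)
Your argument is correct and is precisely the intended one: the paper states this corollary without proof, as an immediate consequence of Lemma~\ref{gorro} and Lemma~\ref{uno}, and you have simply spelled out the parity check on the scalar asymmetry $\varepsilon$ together with the standard fact that in characteristic $\neq 2$ a non-degenerate form is alternating if and only if its asymmetry is $-1$.
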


\begin{cor} Suppose that $V=V_m$, that $\si$ has two elementary divisors, and that $\widehat{f}$ is
alternating.  Then $f$ is indecomposable.
\end{cor}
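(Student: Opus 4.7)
The plan is to argue by contradiction. Suppose $V = U_1 \perp U_2$ with both $U_1, U_2$ nonzero. The main task is to show that the hypothesis forces $\widehat{f}$ to be non-alternating, contradicting the standing assumption.

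First I would verify that each $U_i$ is $\sigma$-invariant. This follows from the computation used in the proof of Lemma \ref{desq}: since $f_{U_i}$ is non-degenerate, $L(U_i) = R(U_i) = U_i^{\perp}$, and from $f(U_i, \sigma u_j) = f(u_j, U_i) = 0$ for $u_j \in U_j$ ($j\neq i$) we deduce $\sigma U_j \subseteq U_j$. Consequently $V = U_1 \oplus U_2$ is a direct sum of $F[X]$-modules, so the elementary divisors of $\sigma$ on $V$ are the union of those of $\sigma|_{U_1}$ and $\sigma|_{U_2}$. By hypothesis $\sigma$ on $V$ has exactly two elementary divisors, both equal to $p^m$ (since $V = V_m$ is free of rank $2$ over $F[X]/(p^m)$). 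Because neither $U_i$ is zero, each $U_i$ must contribute at least one divisor, so each $U_i$ contributes exactly one divisor, necessarily $p^m$. In particular, each $U_i$ is free of rank $1$ over $F[X]/(p^m)$ and has $F$-dimension $m$.

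Next I would apply Lemma \ref{gorro} to $(U_i, f_{U_i})$: the induced form $\widehat{f_{U_i}}$ on $U_i/pU_i$ is non-degenerate (with scalar asymmetry $\pm 1$). But $U_i/pU_i$ is $1$-dimensional over $F$, and a non-degenerate bilinear form on a $1$-dimensional space is automatically non-alternating; choose $u_i \in U_i$ with $\widehat{f_{U_i}}(u_i + pU_i, u_i + pU_i) = f(p^{m-1}u_i, u_i) \neq 0$.

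Finally, using $V = U_1 \oplus U_2$ I would observe that $pV = pU_1 \oplus pU_2$, hence $(U_i + pV)/pV \cong U_i/pU_i$ canonically, and $\widehat{f}(u_i + pV, u_i + pV) = f(p^{m-1}u_i, u_i) \neq 0$. This contradicts the assumption that $\widehat{f}$ is alternating, completing the proof. The argument is essentially routine once the $F[X]$-module decomposition in step one is in place; the mildly delicate point is confirming that the orthogonal decomposition is actually a direct sum decomposition of $F[X]$-modules, but this is immediate from the $\sigma$-invariance established at the outset.
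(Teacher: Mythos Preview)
Your proof is correct and matches the argument the paper leaves implicit (the corollary is stated without proof, being an immediate consequence of Lemma~\ref{uno}): any nontrivial orthogonal splitting would force each summand to carry a single elementary divisor $p^m$, hence a non-alternating $\widehat{f_{U_i}}$ on the $1$-dimensional space $U_i/pU_i$, contradicting the alternating hypothesis on $\widehat{f}$. One small remark: your appeal to ``the computation in the proof of Lemma~\ref{desq}'' for $L(U_i)=R(U_i)$ is slightly misleading, since in that lemma the equality comes from $\sigma$-invariance of $U$, which is exactly what you are trying to establish; however, in your situation the equality $L(U_i)=R(U_i)=U_j$ follows directly from $V=U_1\perp U_2$ and the non-degeneracy of $f$ (hence of $f_{U_i}$), so the conclusion stands.
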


\begin{cor} Suppose that $\chr(F)=2$ and $f$ has Gram matrix
$$\left(%
\begin{array}{cc}
  0 & J_m(1) \\
  I_m & 0 \\
\end{array}%
\right)
$$
for some $m\geq 1$. Then $f$ is indecomposable.
\end{cor}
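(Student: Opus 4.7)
The plan is to invoke the preceding corollary: if $V=V_m$, $\si$ has two elementary divisors, and $\widehat{f}$ is alternating, then $f$ is indecomposable. I would fix a basis $v_1,\ldots,v_m,w_1,\ldots,w_m$ of $V$ realizing the displayed Gram matrix, so that
$$
f(v_i,v_j)=0=f(w_i,w_j),\quad f(v_i,w_j)=(J_m(1))_{ij},\quad f(w_i,v_j)=\dl_{ij}.
$$

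First I would compute the asymmetry: from $A'=AS$ one reads off $S=A^{-1}A'=\mathrm{diag}(J_m(1)',J_m(1)^{-1})$; each diagonal block is similar to $J_m(1)$, so $\si$ has exactly two elementary divisors, both equal to $(X-1)^m$. Therefore $p_\si=(X-1)^m$ and $V=V_m$, so two of the three hypotheses of the preceding corollary are already in place.

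Next I would verify that $\widehat{f}$ is alternating. The key observation is that $U_1=\mathrm{span}(v_1,\ldots,v_m)$ and $U_2=\mathrm{span}(w_1,\ldots,w_m)$ are each $\si$-invariant (by the block form of $S$) and totally $f$-isotropic (by the formulas above). A short computation of $(\si-1)V$ using $(\si-1)v_j=v_{j-1}$ on $U_1$ and the nilpotent action of $\si-1$ on $U_2$ gives
$$
pV=(\si-1)V=\mathrm{span}(v_1,\ldots,v_{m-1},w_2,\ldots,w_m),
$$
a subspace of codimension $2$, so $v_m+pV$ and $w_1+pV$ form an $F$-basis of $V/pV$. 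Now $(\si-1)^{m-1}v_m\in U_1$ and $(\si-1)^{m-1}w_1\in U_2$, and the isotropy of the $U_i$ yields
$$
\widehat{f}(v_m+pV,v_m+pV)=0=\widehat{f}(w_1+pV,w_1+pV).
$$
Since $\widehat{f}$ is symmetric in characteristic $2$ by Lemma~\ref{gorro}, the vanishing of both basis diagonals forces $\widehat{f}(x,x)=0$ for every $x\in V/pV$, i.e.\ $\widehat{f}$ is alternating. Applying the preceding corollary then produces the indecomposability of $f$.

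The main obstacle is the precise description of $pV$: one must confirm that $v_m$ and $w_1$ lie outside $pV$, not merely that $U_1,U_2$ are totally isotropic. The cleanest route is a dimension count ($\dim pV=2m-2$, since $V$ is free of rank $2$ over $F[X]/(p^m)$) coupled with the $2m-2$ explicit generators above, which are visibly independent of $v_m$ and $w_1$. Once this is in place, the remainder of the argument is entirely formal.
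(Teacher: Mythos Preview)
Your proof is correct and follows precisely the route the paper intends: the corollary is stated without proof because it is meant to be deduced from the preceding corollary by checking its three hypotheses, which is exactly what you do. Your computations of $S=A^{-1}A'=\mathrm{diag}(J_m(1)',J_m(1)^{-1})$, of $pV$, and of the vanishing of the diagonal values of $\widehat{f}$ on the basis $\{v_m+pV,\,w_1+pV\}$ are all accurate, and the final step (symmetric with zero diagonal in characteristic $2$ implies alternating) is valid.
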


\begin{note} Suppose $\chr(F)\neq 2$ and $m$ is
odd in the above example. Then $\widehat{f}$ is symmetric and
hence non-alternating, so $f$ has 2 components, each with a single
elementary divisor $(X-1)^m$ by Lemma \ref{uno}.
\end{note}

\begin{lemma} Suppose $\chr(F)=2$, $V=V_m$ and $m$ is even. Then
$\widehat{f}$ is alternating.
\end{lemma}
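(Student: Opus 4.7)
The plan is to prove the equivalent statement $f(p^{m-1}u,u)=0$ for every $u\in V$. Indeed, by Lemma~\ref{gorro} the form $\widehat{f}$ is symmetric in characteristic $2$ (its scalar asymmetry is $1$), so it is alternating precisely when $\widehat{f}(\bar{u},\bar{u})=f(p^{m-1}u,u)=0$ for all $u$. The key observation driving everything is that in characteristic $2$ the polynomial $p=X\pm 1$ satisfies $p^{*}=p$, so the formula recalled in~\S\ref{sec:riehm} specializes to
\[
f(p^{j}v,w)=f(v,\si^{-j}p^{j}w)
\]
for all $v,w\in V$ and $j\geq 0$.

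Fix $u\in V$ and set $A_{j,k}=f(p^{j}u,p^{k}u)$. First I would record two identities. Identity (a), the above formula applied with $v=u$ and $w=p^{k}u$, reads
\[
A_{j,k}=f(u,\si^{-j}p^{j+k}u).
\]
Identity (b) comes from the defining relation $f(v,u)=f(u,\si v)$ applied with $v=p^{k}u$ and $u$ replaced by $p^{j}u$, together with the expansion $\si p^{k}=p^{k}+p^{k+1}$; after rearranging it reads
\[
A_{j,k+1}=A_{k,j}+A_{j,k}
\]
in characteristic $2$.

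Now set $k_{0}=m/2$. Identity (a) gives $A_{k_{0},k_{0}}=f(u,\si^{-k_{0}}p^{m}u)=0$, since $p^{m}u=0$. Feeding (b) the choices $j=k=k_{0}-1$ yields $A_{k_{0}-1,k_{0}}=2A_{k_{0}-1,k_{0}-1}=0$, and then feeding it $j=k_{0}$, $k=k_{0}-1$ yields
\[
0=A_{k_{0},k_{0}}=A_{k_{0}-1,k_{0}}+A_{k_{0},k_{0}-1}=A_{k_{0},k_{0}-1}.
\]
Reading $A_{k_{0},k_{0}-1}$ back through (a) gives $A_{k_{0},k_{0}-1}=f(u,\si^{-k_{0}}p^{m-1}u)=f(u,p^{m-1}u)$, where the last equality uses that $\si^{-k_{0}}$ has constant term $1$ in $F[X]/(p^{m})$, so $\si^{-k_{0}}p^{m-1}=p^{m-1}$ modulo $p^{m}$. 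Hence $f(u,p^{m-1}u)=0$, and the asymmetry identity $f(p^{m-1}u,u)=f(u,\si p^{m-1}u)=f(u,p^{m-1}u)$ (again using $\si p^{m-1}=p^{m-1}$ in $F[X]/(p^{m})$) finishes the argument.

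The only foresight needed is to look at the diagonal entry $A_{k_{0},k_{0}}$, which vanishes automatically precisely because $2k_{0}=m$; identity (b) then mechanically propagates the vanishing to $A_{k_{0},k_{0}-1}=f(u,p^{m-1}u)$. I do not foresee any genuine obstacle beyond this indexing choice.
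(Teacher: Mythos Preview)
Your argument is correct. The two identities you set up do the job: identity~(b) with $j=k=k_0-1$ gives $A_{k_0-1,k_0}=0$, then with $j=k_0$, $k=k_0-1$ it gives $A_{k_0,k_0-1}=0$, and your reduction of $\si^{-k_0}p^{m-1}$ to $p^{m-1}$ modulo $p^m$ is valid since $\si\equiv 1\pmod p$.

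Your route is genuinely different from the paper's. The paper first reduces to the case where $f$ is indecomposable, then invokes Wall's rank argument: the alternating form $g(u,v)=f(u,v)-f(v,u)=f(u,(1-\si)v)$ has even rank, and since $f$ is non-degenerate this rank equals $\mathrm{rank}(\si-1)=(m-1)\cdot(\text{number of elementary divisors of }\si)$; as $m-1$ is odd, that number must be even, and Lemma~\ref{uno} then forces $\widehat{f}$ to be alternating. Your proof is more self-contained: it is a direct computation valid for arbitrary $u\in V$, requiring neither the reduction to indecomposable $f$ nor the dichotomy of Lemma~\ref{uno}. The paper's approach, on the other hand, is shorter once those tools are in place and explains structurally \emph{why} evenness of $m$ matters (via the parity of $m-1$ in the rank count), whereas in your argument the role of evenness is that $2k_0=m$ makes $A_{k_0,k_0}$ vanish automatically.
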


\begin{proof} It suffices to show this when $f$ is indecomposable. Consider the bilinear form $g:V\times V\to F$ given by
$$g(u,v)=f(u,v)-f(v,u)=f(u,v)-f(u,\si v)=f(u,(1-\si)v),\quad u,v\in V.
$$
It is alternating, and hence of even rank. Since $f$ is non-degenerate, the rank of $g$ is the rank of $\si-1$, namely
$m-1$ times the number of elementary divisors of~$\si$. Since $m-1$ is odd, $\si$ must have an even number of elementary divisors.
This argument is due to Wall \cite{W}.

As $f$ is indecomposable, Lemma \ref{uno} implies that $\widehat{f}$ is alternating.
\end{proof}

\section{The non-alternating case}\label{sec:one}

\begin{theorem} Suppose that $f$ is non-degenerate and its asymmetry $\si$
has a single elementary divisor $p^n$, where $p=X\pm 1$ and $n\geq 1$. Then
either $p=X-1$ and $n$ is odd, or $p=X+1$, $\chr(F)\neq 2$ and $n$ is even.
Moreover, if $F$ is quadratically closed then the equivalence class of $f$
is uniquely determined by the similarity type of $\si$.
\end{theorem}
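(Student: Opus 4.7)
The strategy has two parts: first, deducing the parity constraints on $n$; second, reducing the uniqueness claim to a surjectivity statement for a ``norm'' map on the local ring $R=F[\sigma]\cong F[X]/(p^n)$, equipped with the involution $\tau$ sending $X$ to $X^{-1}$. For the parities, since $\sigma$ has a single elementary divisor $p^n$, $V$ is cyclic as an $F[\sigma]$-module; any orthogonal decomposition of $V$ under $f$ would furnish two $\sigma$-invariant summands and contradict cyclicity, so $f$ is indecomposable. By Lemma~\ref{uno}, $\widehat f$ is non-alternating, and by Lemma~\ref{gorro} its scalar asymmetry is $(-1)^{n-1}$ if $p=X-1$ and $(-1)^n$ if $p=X+1$. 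When $\chr(F)\neq 2$, a scalar asymmetry equal to $-1$ forces alternatingness, so non-alternatingness requires $n$ odd in the first case and $n$ even in the second; when $\chr(F)=2$, the last lemma of Section~\ref{sec:oneortwo} excludes even $n$, so $n$ is odd. This gives the first claim.

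For uniqueness, pick a cyclic generator $v$ of $V$. Because $X$ is a unit in $R$, the involution $\tau$ is well-defined. Any non-degenerate form $f'$ on $V$ with asymmetry exactly $\sigma$ is of the form $f'(u,w)=f(u,g(\sigma)w)$ for a unique $g\in R^\times$; applying the identity $f(q(\sigma)u,w)=f(u,\sigma^{-\deg q}q^{*}(\sigma)w)$ from Section~\ref{sec:riehm} shows that $f'$ has asymmetry $\sigma$ precisely when $\tau(g)=g$, and that a $\sigma$-commuting change of basis $w\mapsto h(\sigma)w$ multiplies $g$ by $h\tau(h)$. The uniqueness claim thus reduces to surjectivity of the norm map $N\colon R^\times\to R^{\tau,\times}$, $h\mapsto h\tau(h)$.

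Surjectivity is proved by successive approximation: given $g\in R^{\tau,\times}$, construct $h_k\in R^\times$ with $N(h_k)\equiv g\pmod{p^k}$ for $k=1,2,\dots,n$. The base case $k=1$ solves $h_1^2=g\bmod p$ in $F^\times$ by quadratic closure. The step from $h_k$ to $h_{k+1}$ requires a $1$-unit $h'$ such that $N(h')$ cancels the residue $gN(h_k)^{-1}$, which lies in $1+(p^k)/(p^{k+1})$ and is $\tau$-fixed. Since $\tau(p)\equiv-p\pmod{p^2}$, the involution acts on $(p^k)/(p^{k+1})$ as $(-1)^k$, so $\tau$-fixedness either forces the residue to vanish (automatically closing this step) or leaves it to be an arbitrary $cp^k$ with $c\in F$. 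In characteristic not $2$, the choice $h'=1+(c/2)p^k$ handles the latter case because $p^{2k}$ vanishes for $2k\geq n$; the parity constraints from step one ensure the nontrivial alternative only arises at values of $k$ where such an $h'$ is effective.

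The main technical obstacle is characteristic $2$. Here squaring annihilates linear perturbations, so the ansatz $h'=1+dp^k$ is inadequate at the top level $k=n-1$. For $n\geq 5$ odd, the alternative $h'=1+cp^{n-2}$ works: expanding $\tau(p)=p(1+p)^{-1}$ yields $p^{n-2}+\tau(p)^{n-2}\equiv p^{n-1}\pmod{p^n}$, while the cross term $p^{2(n-2)}$ already vanishes. The obstinate case is $n=3$: any $1$-unit $h'=1+h_1p+h_2p^2$ gives $N(h')\equiv 1+(h_1^2+h_1)p^2\pmod{p^3}$ after full expansion (the coefficient $h_2$ drops out), so matching $N(h')=1+cp^2$ forces the separable Artin--Schreier equation $h_1^2+h_1=c$. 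Its solvability is exactly the characteristic-$2$ content of quadratic closure---every separable quadratic polynomial over $F$ has a root---and this Artin--Schreier step is the technical heart of the uniqueness argument, consistent with the excerpt's warning that the non-alternating case is considerably harder than the alternating one.
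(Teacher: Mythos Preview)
Your strategy---reduce to surjectivity of the norm $N\colon R^\times\to R^{\tau,\times}$ on $R=F[X]/(p^n)$ with involution $\tau(X)=X^{-1}$---is a legitimate alternative to the paper's proof, which instead runs a direct induction on $n$ via Gram-matrix manipulations relative to cyclic bases. Your route is closer in spirit to Riehm's hermitian-form reduction. One point you skip: the assertion that every non-degenerate $f'$ with asymmetry exactly $\sigma$ is $f(\,\cdot\,,g(\sigma)\,\cdot\,)$ for some $g\in R^\tau$ amounts to saying that every $f$-self-adjoint operator on a \emph{cyclic} $\sigma$-module lies in $F[\sigma]$; this is true (both spaces have dimension $\lceil n/2\rceil$), but it is not automatic and deserves an argument.

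The genuine gap is in characteristic $2$ for $n\ge 5$. You write that the ansatz $h'=1+dp^k$ ``is inadequate at the top level $k=n-1$'', implicitly suggesting it handles $k<n-1$; but in characteristic $2$ one has $p^k+\tau(p)^k\equiv 0\pmod{p^{k+1}}$ and the cross term lies in $(p^{2k})\subset(p^{k+1})$, so $N(1+dp^k)\equiv 1\pmod{p^{k+1}}$ for \emph{every} $k\ge 1$: the ansatz is useless at all levels, not just the top one. Your correction $h'=1+cp^{n-2}$ does treat $k=n-1$, and your Artin--Schreier computation treats $n=3$, but for $n\ge 5$ the intermediate steps $k=2,\dots,n-2$ are unaddressed. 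Concretely, at $k=2$ the residue is $1+cp^2\pmod{p^3}$ with $c$ arbitrary, and any $1$-unit $h'=1+dp+\cdots$ gives $N(h')\equiv 1+(d+d^2)p^2\pmod{p^3}$: an Artin--Schreier equation is needed here for \emph{every} odd $n\ge 3$, not only $n=3$. (The odd-$k$ steps are in fact automatic, since $R^\tau=F[y]$ with $y=p\tau(p)$ of valuation $2$, so a $\tau$-fixed residue $\equiv 1\pmod{p^{2j+1}}$ is already $\equiv 1\pmod{p^{2j+2}}$; your $(-1)^k$ analysis does not detect this in characteristic $2$.) The approach is repairable---after solving Artin--Schreier at $k=2$, the remaining even steps $k=4,6,\dots$ can be handled linearly---but as written the lifting argument is incomplete.
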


\begin{proof}\label{totu} The first assertion follows from \S\ref{sec:oneortwo}. Suppose
that $F$ is quadratically closed and that $f:V\times V\to F$ and $g:V\times V\to F$
are non-degenerate whose asymmetries $\si$ and $\tau$ are similar and have a single
elementary divisor $p^n$, where $p=X\pm 1$ and $n\geq 1$. We wish to show
that $f$ is equivalent to $g$. We argue by induction on $n$.

By a cyclic basis of $V$ (resp. $W$) we mean a basis of the form $u,pu,\dots,p^{n-1}u$,
where $u$ is a cyclic vector of $V$ (resp. $W$) with respect to $\si$ (resp. $\tau$).

Since $p^n$ annihilates $V$ (resp. $W$), it is immediately
verified that the Gram matrix of $f$ (resp. $g$) relative to a
cyclic basis is skew upper triangular, i.e., all entries below the
skew diagonal, which runs between positions $(1,n)$ and $(n,1)$,
are 0. Since $f$ (resp. $g$) is non-degenerate, all entries along
the skew diagonal are non-zero.

The base cases are $n=1$ when $p=X-1$, and $n=2$ when $p=X+1$, $\chr(F)\neq 2$. Since $F$ is quadratically closed, relative to suitable cyclic bases, $f$ and $g$ admit Gram matrices $(1)$ in the first case, and $\left(
                                                            \begin{array}{cc}
                                                              1 & 2 \\
                                                              -2 & 0 \\
                                                            \end{array}
                                                          \right)$ in the second.

Suppose $n\geq 3$ when $p=X-1$, and $n\geq 4$ when $p=X+1$, $\chr(F)\neq 2$. Suppose also the result is true in smaller dimensions.
Let $\widetilde{f}$ and $\widetilde{g}$ be the bilinear forms on $\widetilde{V}=V/p^{n-1}V$ and $\widetilde{W}=W/p^{n-1}W$ induced by $f$ and $g$.
By inductive hypothesis there is an isometry  $\Omega:\widetilde{V}\to \widetilde{W}$. Let $v\in V$ be a $\si$-cyclic vector. Then
$$
\Omega(pv+p^{n-1}V)=pw+p^{n-1}W
$$
for some $w\in W$. Now $\widetilde{V}$ and $\widetilde{W}$ are $F[X]$-modules via $\widetilde{\si}$ and $\widetilde{\tau}$, the maps induced by $\si$ and $\tau$,
so $\Omega$ is an $F[X]$-module isomorphism. Since $p^{n-3}(pv+p^{n-1}V)$ is not zero in $\widetilde{V}$, we see that $w$ is a $\tau$-cyclic
vector of $W$. Let $\B$ and $\C$ be the cyclic bases of $V$ and $W$ generated by $v$ and $w$. Since $\Omega$ is an isometry
and $p^n$ annihilates $V$ and $W$, we see that the Gram matrices of $f$ and $g$ relative to these bases become identical
once their first row and column is removed.

\noindent{\sc Case 1.} $p=X-1$ and $\chr(F)\neq 2$. Consider the linear system with $n-1$ equations in $n$ variables
$$
g(u,pw)=f(v,p v),\dots,g(u,p^{n-1}w)=f(v,p^{n-1}v).
$$
Since $pw,\dots,p^{n-1}w$ are linearly independent and $g$ is non-degenerate, the system has a solution, say $u_0\in W$.
Note that $p^{n-1}w$ is a solution to the associated homogeneous system, so $u=u_0+tp^{n-1}w$ is a solution to the original system
for any $t\in F$. Choose $t$ so that
$$
f(v,v)=g(u,u)=g(u_0,u_0)+2tg(u_0,p^{n-1}w).
$$
This is possible since $g(u_0,p^{n-1}w)=f(v,p^{n-1}v)$ is a skew diagonal entry and hence non-zero. Moreover, since
$g(u,p^{n-1}w)\neq 0$, it follows that $u\notin pW$, so $u,pw,\dots,p^{m-1}w$ is a basis of $W$. We easily verify that
the Gram matrix of $g$ relative to this basis is equal to the Gram matrix of $f$ relative to $v,pv,\dots,p^{n-1}v$.

\noindent{\sc Case 2.} $p=X+1$ and $n\geq 4$ (whatever $\chr(F)$ is). Consider the linear system with $n-2$ equations in $n$ variables
$$
g(u,p^2w)=f(v,p^2 v),\dots,g(u,p^{n-1}w)=f(v,p^{n-1}v).
$$
Since $p^2w,\dots,p^{n-1}w$ are linearly independent and $g$ is non-degenerate, the system has a solution, say $u_0\in W$.
Note that $p^{n-2}w$ is a solution to the associated homogeneous system, so $u=u_0+tp^{n-2}w$ is a solution to the original system
for any $t\in F$. Choose $t$ so that
$$
f(v,v)=g(u,u)=g(u_0,u_0)+tg(u_0,p^{n-1}w).
$$
Next let $z=pw+sp^{n-1}w$, where $s\in F$ is chosen so that
$$
f(v,pv)=f(u,z)=f(u,pw)+sf(u,p^{n-1}w).
$$
Then $u,z,p^2w,\dots,p^{n-1}w$ is a basis of $W$ and we easily verify that
the Gram matrix of $g$ relative to this basis is equal to the Gram matrix of $f$ relative to $v,pv,\dots,p^{n-1}v$.

\noindent{\sc Case 3.} $p=X+1$, $\chr(F)=2$ and $n=3$. Since $F$ is quadratically closed,
it is trivial to verify that the Gram matrix of $f$ relative to a suitable cyclic basis is
$$
\left(
\begin{array}{ccc}
a & 0 & 1 \\
1 & 1 & 0 \\
1 & 0 & 0 \\
\end{array}
\right).
$$
Choose $x\in F$ so that $x^2+x+a=0$. Then
$$
\left(
\begin{array}{ccc}
1 & x & 0 \\
0 & 1 & 0 \\
0 & 0 & 1 \\
\end{array}
\right)\left(
\begin{array}{ccc}
a & 0 & 1 \\
1 & 1 & 0 \\
1 & 0 & 0 \\
\end{array}
\right)\left(
\begin{array}{ccc}
1 & 0 & 0 \\
x & 1 & 0 \\
0 & 0 & 1 \\
\end{array}
\right)=\left(
\begin{array}{ccc}
0 & x & 1 \\
x+1 & 1 & 0 \\
1 & 0 & 0 \\
\end{array}
\right)\sim \left(
\begin{array}{ccc}
0 & 0 & 1 \\
1 & 1 & 0 \\
1 & 0 & 0 \\
\end{array}
\right).
$$
This shows that $f$ is determined by $\si$.
\end{proof}

The existence of at least one bilinear form $f$ whose asymmetry
has a single elementary divisor $p^n$, where $p=X-1$ and $n$ is
odd, or $p=X+1$, $\chr(F)\neq 2$ and $n$ is even, was proven by
Wall \cite{W}. The proof of Theorem \ref{totu} actually gives a
recursive mechanism to produce \emph{ all } Gram matrices of such
$f$, with respect to a fixed cyclic basis, i.e., when $\si$ has
matrix $J_n(1)$ if $p=X=1$ and $n$ is odd, and matrix $J_n(-1)$ if
$p=X+1$, $\chr(F)\neq 2$ and $n$ is even. Of course, all such
forms are equivalent, but our point here is that the proof of
Theorem \ref{totu} can be used to individually describe all such
$f$ with respect to a fixed cyclic basis. We omit the details for
two reasons. They are not relevant to our goals and, most
importantly, not a single one of these Gram matrices has a
particularly pleasant shape. Unfortunately, that is also the case
if we use, instead, a basis $v,\si v,\dots,\si^{n-1}v$, in which
case the matrix of $\si$ is the companion matrix to $p^n$ and
every $f$ with asymmetry $\si$ has a Toeplitz Gram matrix. By far,
the simplest known type of Gram matrix is $\Gamma_n$ when
$\chr(F)\neq 2$ and $\Gamma_n^0$ when $\chr(F)=2$, as given in
\cite{HS3}. It is immediately verified that if $\chr(F)\neq 2$,
then the asymmetry $\Gamma_n^{-1}\Gamma_n'$ of $\Gamma_n$ has a
single elementary divisor, namely $(X-1)^n$ if $n$ is odd and
$(X+1)^n$ if $n$ is even, while if $\chr(F)=2$ and $n$ is odd then
the asymmetry of $\Gamma_n^0$ has a single elementary divisor
$(X-1)^n$. In all of these cases the asymmetry has an unremarkable
shape, which makes the actual finding of $\Gamma_n$ and
$\Gamma_n^0$ intriguing.

\section{The alternating case}
\label{sec:two}

\begin{theorem}\label{tota} Suppose $f$ is non-degenerate and indecomposable, and
that its asymmetry $\si$ has 2 elementary divisors $p^m,p^m$,
where $p=X\pm 1$ and $m\geq 1$.

(i) If $\chr(F)\neq 2$ then $f$ has Gram matrix
$$
\left(%
\begin{array}{cc}
  0 & J_m(\mp 1)' \\
  I_m & 0 \\
\end{array}%
\right)
$$
relative to some basis of $V$, so the equivalence class of $f$ is
completely determined by the similarity type of $\si$.

(ii) The same conclusion follows if $\chr(F)=2$ and $F$ has no
separable quadratic extensions.
\end{theorem}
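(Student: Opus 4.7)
The plan is to construct an explicit basis of $V$ in which $f$ has the asserted Gram matrix; the uniqueness of the equivalence class of $f$ then follows at once. By Lemma~\ref{uno}, since $f$ is indecomposable and $\widehat{f}$ is alternating, $V$ is free of rank~$2$ over $R = F[X]/(p^m)$ and $\widehat{f}$ is a non-degenerate alternating form on the $2$-dimensional quotient $V/pV$. I would begin by picking cyclic generators $v_1, v_2 \in V$ whose images form a symplectic basis of $V/pV$, giving $V = F[X]v_1 \oplus F[X]v_2$ together with the normalizations $f(p^{m-1}v_1, v_2) = 1$ and $f(p^{m-1}v_i, v_i) = 0$ for $i = 1,2$. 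The target Gram matrix additionally requires that each $F[X]v_i$ be totally isotropic and that the cross pairing have a specific Toeplitz shape.

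The main step is to modify $v_1, v_2$ by substitutions $v_i \mapsto v_i + h_i(p)\,v_{3-i}$, with $h_i \in R$, so as to kill the self-pairings $f(v_i, p^k v_i)$, and then to rescale to normalize the cross pairing. Expanding the self-pairing equation $f(v_i + h_i(p)v_{3-i},\ p^k(v_i + h_i(p)v_{3-i})) = 0$ level by level and using the asymmetry identity $f(u, v) = f(v, \si u)$ to reduce, I expect the $k = m - 1$ equation to hold automatically (since $\widehat{f}(\overline{v}_i, \overline{v}_i) = 0$), and each subsequent level $k < m - 1$ to yield a one-parameter equation for a fresh coefficient of $h_i$. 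In characteristic $\neq 2$, direct calculation using the asymmetry shows that the leading coefficient of this equation is of the form $\pm 2\lambda \cdot \widehat{f}(\overline{v}_1, \overline{v}_2) = \pm 2$, which is nonzero; so each equation is linear and solvable over $F$ with no further hypothesis, proving~(i). The cross-pairing normalization in the last step is then an explicit Toeplitz computation using $\si - \lambda = p$.

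The crux of the argument, and the reason for the extra hypothesis in~(ii), is the characteristic $2$ case. There the factor $2$ vanishes, the linear coefficient at each level collapses, and each self-pairing equation degenerates into a quadratic equation in the next undetermined coefficient of $h_i$. The alternating identity $\widehat{f}(v, v) = 0$ cancels the ``pure square'' terms, so this quadratic takes the Artin--Schreier form $x^2 + x = c$ rather than a general quadratic. Under the hypothesis that $F$ has no separable quadratic extensions, the Artin--Schreier map $\wp\colon x \mapsto x^2 + x$ is surjective on $F$, so every such equation is solvable and the whole modification procedure goes through, proving~(ii). The hardest part of the proof will be verifying, carefully and at every level of the recursion, that the obstruction is indeed of Artin--Schreier type rather than a more general quadratic (which would require the much stronger hypothesis of $F$ quadratically closed); Case~$3$ of the proof of Theorem~\ref{totu} is precisely such an Artin--Schreier reduction in miniature and will serve as a template.
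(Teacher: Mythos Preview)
Your strategy is sound and will work, but it is organized differently from the paper's proof. The paper argues by induction on $m$: it passes to the subquotient $\widetilde{V}=pV/p^{m-1}V$, on which the induced form $\widetilde{f}$ is again indecomposable with asymmetry having two elementary divisors $p^{m-2},p^{m-2}$; the inductive hypothesis then supplies generators $v,w$ whose images generate totally isotropic $R$-summands of $\widetilde{V}$. Lifting and taking the basis of $T=F[X]w$ dual to $v,pv,\dots,p^{m-1}v$, one finds that the Gram matrix already has the target shape except for a single corner entry in each block (plus a couple of forced off-diagonal entries when $p=X+1$). The whole problem thus collapses to the two base computations $m=2$, $p=X-1$ and $m=3$, $p=X+1$, $\chr F\neq 2$; the Artin--Schreier equation $bx^2+x+a=0$ appears exactly once, in the first of these, rather than at every level.

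What the paper's route buys is that the ``hardest part'' you flag --- checking at every level that the char~$2$ obstruction is genuinely Artin--Schreier rather than a general quadratic --- is replaced by a single $4\times 4$ matrix calculation. What your route buys is directness: no inductive scaffolding, and an explicit recipe for the correction $h_i$. If you pursue your approach, two points deserve care. First, in characteristic $2$ the coefficient of the ``fresh'' parameter $c_{m-1-k}$ at level $k$ is $\widehat{f}(\bar v_1,\bar v_2)+\widehat{f}(\bar v_2,\bar v_1)=0$, so the equation is \emph{not} linear in $c_{m-1-k}$ but quadratic in the \emph{previous} coefficient $c_{m-2-k}$; the level indexing shifts by one relative to the $\chr F\neq 2$ case. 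Second, after making $F[X]v_1'$ isotropic, the modification $v_2\mapsto v_2+h_2(p)v_1'$ has vanishing quadratic term (since $F[X]v_1'$ is isotropic), so the second pass is genuinely easier --- purely linear --- and you should exploit this asymmetry rather than treating the two modifications symmetrically.
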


\begin{proof} By induction on $m$. If $m=1$ then, by Lemma
\ref{uno}, $f$ has Gram matrix
$$
\left(%
\begin{array}{cc}
  0 & \mp 1 \\
  1 & 0 \\
\end{array}%
\right)
$$
when $p=X\pm 1$. Suppose $m>1$ and the result is true for
exponents less than $m$. Let $\widetilde{V}=pV/p^{m-1} V$. Then
the restriction of $f$ to $pV$ induces a non-degenerate bilinear
form  $\widetilde{f}$ on  $\widetilde{V}$ with asymmetry
$\widetilde{\si}$, the map that $\si$ induces on  $\widetilde{V}$.
The elementary divisors of $\si$ on  $\widetilde{V}$ are
$p^{m-2},p^{m-2}$. Lemma \ref{uno} ensures that  $\widetilde{f}$
is indecomposable.

By inductive hypothesis there are $v,w\in V$ such that
$\widetilde{v},\widetilde{w}$ form a basis of the
$F[X]/(p^{m-2})$-module  $\widetilde{V}$ and the
$F[X]/(p^{m-2})$-modules they generate are totally isotropic for
$\widetilde{f}$. We easily see that $v,w$ form a basis of the
$F[X]/(p^{m})$-module $V$.

Let $S=F[X]v$, which has $F$-basis $v,pv,\dots,p^{m-1}v$, and let
$T=F[X]w$. Since $\widehat{f}$, as defined in Lemma \ref{gorro},
is alternating, the map $S\to T^*$ induced by $f$ must be a linear
isomorphism. Let $w_0,w_1,\dots,w_{m-1}$ be an $F$-basis of $T$
dual to $v,pv,\dots,p^{m-1}v$. Then
$$
f(p^{i+1}w_i,p^j v)= f(w_i,\si^{-(i+1)}(p^*)^{i+1}p^j v)=0,\quad
0\leq i,j\leq m-1,
$$
so $p^{i+1}w_i=0$, i.e., $w_i\in p^{m-(i+1)}T$. It follows that
$f$ annihilates the subspaces generated by $pv,\dots,p^{m-1}v$ and
$w_0,\dots,w_{m-2}$. Moreover, if $p=X-1$ and $u$ is in $S$ or
$T$, we have
$$
f(u,(\si-1)u)=f(u,\si u)-f(u,u)=0,
$$
which readily implies
$$
f(u,(\si-1)^i u)=0,\quad i\geq 1.
$$
This, in turn, yields
$$
f((\si-1)u,u)=-f(u,\si^{-1}(\si-1)u)=0
$$
and therefore
$$
f((\si-1)^iu,u)=0,\quad i\geq 1.
$$
On the other hand, if $p=X+1$ and $u$ is in $S$ or $T$, we have
$$
f(u,(\si+1)^i u)=0=f((\si+1)^i u, u),\quad i\geq 2,
$$
and
$$
f(v,(\si+1)v)=2f(v,v), f((\si+1)v,v)=-2f(v,v).
$$
All in all, relative to the $F$-basis
$v,pv,\dots,p^{m-1}v,w_0,w_1,\dots,w_{m-1}$ of $V$, the Gram
matrix of $f$ is equal to
$$
C=\left(%
\begin{array}{cc}
  B & J' \\
  I & A \\
\end{array}%
\right),\quad J=J_m(\mp 1).
$$
Here all entries of $B$ (resp. $A$), except possibly for $B_{11}$
(resp. $A_{nn}$), are 0 when $p=X-1$, while all entries of $B$
(resp. $A$), except possibly for $B_{11},B_{12},B_{21}$ (resp.
$A_{nn},A_{n-1,n},A_{n,n-1}$), are 0 if $p=X+1$, in which case
$B_{12}=2B_{11}$ and $B_{21}=-2B_{11}$. In order to relate
$A_{n-1,n}$ and $A_{n,n-1}$ to $A_{n,n}$ when $p=X+1$, note that
in this case the matrix of $\si$ relative to the above basis of
$V$ is
$$
S=\left(%
\begin{array}{cc}
  J & 0 \\
  0 & U \\
\end{array}%
\right),\quad J=J_m(-1),
$$
where
$$
CS=C'.
$$
This forces $U=(J')^{-1}$ and $A(J')^{-1}=A'$, which gives
$A_{n-1,n}=-2A_{n,n}$  and $A_{n,n-1}=2A_{n,n}$.

The rest of the proof essentially reduces to the cases: $m=2$ and
$p=X-1$; $m=3$, $p=X+1$ and $\chr(F)\neq 2$. Note that when $m=2$
and $p=X-1$ we arrive at the same conclusions as above directly
through Lemma \ref{uno} and the use of dual bases, without
resorting to an inductive argument.

Suppose first $m=2$ and $p=X-1$. Then
$$
C=\left(%
\begin{array}{cccc}
  a & 0 & 1 & 1 \\
  0 & 0 & 0 & 1 \\
  1 & 0 & 0 & 0 \\
  0 & 1 & 0 & b \\
\end{array}%
\right).
$$
If $\chr(F)\neq 2$ we can use two obvious elementary congruence
transformations (E.C.T.) to eliminate $a$ and $b$.

Suppose $\chr(F)=2$ and $F$ satisfies the stated hypothesis. If
exactly one of $a,b$ is 0, we can easily eliminate the other by
means of two E.C.T. Suppose both are different from 0. For $x\in
F$, add $x$ times column 4 to column 1 of $C$ and then add $x$
times row 4 to row 1 of the resulting matrix. We find $bx^2+x+a$
in entry (1,1). By hypothesis we may choose $x\in F$ so that
$bx^2+x+a=0$. This will eliminate entry (1,1). The rest of the
argument is routine.

Suppose next $m=3$, $p=X+1$ and $\chr(F)\neq 2$. Then
$$
\left(%
\begin{array}{cccccc}
  a & 2a & 0 & -1 & 1 & 0 \\
  -2a & 0 & 0 & 0 & -1 & 1 \\
  0 & 0 & 0 & 0 & 0 & -1 \\
  1 & 0 & 0 & 0 & 0 & 0 \\
  0 & 1 & 0 & 0 & 0 & 2\alpha \\
  0 & 0 & 1 & 0 & -2\alpha & \alpha \\
\end{array}%
\right).
$$
Subtract $a$ times column 5 from column 1 and then do the same to
row 1. This eliminates entry (1,1) of the resulting matrix.
Another E.C.T. eliminates entries (1,2) and (2,1). The rest of the
argument is routine.
\end{proof}

\begin{note} Our hypothesis on $F$ (placed only when $\chr(F)=2$)
means that $F$ has no separable or Galois extensions of degree
$2^t$ for any $t\geq 1$. This certainly holds if $F$ is
quadratically closed. Note, however, that the separable closure of
$F_2(X)$ has no Galois extensions of degree $>1$ but is not
quadratically closed.
\end{note}

\begin{note} If $\chr(F)=2$ and $F$ has a separable quadratic extension then the
conclusion of Theorem \ref{tota} fails. Indeed, suppose
$q=X^2+\alpha X+\beta\in F[X]$ is irreducible, where $\alpha\neq
0$. Let $b=1/\alpha$ and $a=\beta/\alpha$. Consider the quadratic
$Q$ form over $F$:
$$
Q(X_1,X_2)=aX_1^2+bX_2^2+X_1X_2.
$$
Since $q$ is irreducible, $Q$ does not represent 0. In particular,
$Q$ is not equivalent to $X_1X_2$. Now $Q$ is the quadratic form
on $V/p V$ associated to the bilinear form $f$ with Gram matrix
$C$, where $a,b$ are chosen as above, via
$$
Q(u+pV,u+pV)=f(pu,u), \quad u\in V,
$$
while $X_1X_2$ corresponds to the choice $a=0=b$. Thus, these two
choices for $C$ render non-equivalent bilinear forms.
\end{note}

\section{Classification and normal forms of split forms}
\label{sec:main}

Our results from \S\ref{sec:oneortwo}, \S\ref{sec:one} and
\S\ref{sec:two} readily yield a classification of all split forms
as well as a list of split normal forms, when $F$ is quadratically
closed. The statements below are in perfect agreement with the
corresponding ones from \cite{R} and \cite{HS3} when $F$ is
algebraically closed. Of course, in this case, the results stated
below are applicable to all bilinear forms. In view of Theorem
\ref{matr}, we may restrict to the case of non-degenerate forms.
Recall the definitions of $\Gamma_n$ and $\Gamma_n^0$, as given in
\cite{HS3}.
$$
\Gamma_1=(1),\,\Gamma_2=\left(\begin{array}{cc}
0&-1\\
1&1
\end{array}\right),\,\Gamma_3= \left(\begin{array}{ccc}
0&0&1\\
0&-1&-1\\
1&1&0
\end{array}\right),\,\Gamma_4= \left(\begin{array}{cccc}
0&0&0&-1\\
0&0&1&1\\
0&-1&-1&0\\
1&1&0&0
\end{array}\right),\dots
%\Gamma_5= \left(\begin{array}{ccccc}
%0&0&0&0&1\\
%0&0&0&-1&-1\\
%0&0&1&1&0\\
%0&-1&-1&0&0\\
%1&1&0&0&0
%\end{array}\right),
$$
$$
\Gamma_1^0=(1),\,\Gamma_3^0= \left(\begin{array}{ccc}
0&0&1\\
0&1&0\\
1&1&0
\end{array}\right),\,
\Gamma_5^0= \left(\begin{array}{ccccc}
0&0&0&0&1\\
0&0&0&1&0\\
0&0&1&0&0\\
0&1&1&0&0\\
1&1&0&0&0
\end{array}\right),\dots
$$

\begin{theorem}\label{t1} Suppose that $F$ is quadratically closed.
Let $V$ be a $F$-vector space of finite dimension $n\geq 1$. Then
the equivalence class of a non-degenerate, indecomposable and
split form $f:V\times V\to F$ is uniquely determined by the
similarity type of its asymmetry $\si$. Moreover, if $f$ admits
any of the following as Gram matrix, then $f$ is a non-degenerate,
indecomposable and split form and $\si$ has the indicated
elementary divisors:
$$
A_n=\Gamma_n, n\text{ odd}, \chr(F)\neq 2, (X-1)^n,
$$
$$
B_n=\Gamma_n^0, n\text{ odd}, \chr(F)=2, (X-1)^n,
$$
$$
C_n=\Gamma_n, n\text{ even}, \chr(F)\neq 2, (X+1)^n,
$$
$$
D_n=\left(
               \begin{array}{cc}
                 0 & J_m(1) \\
                 I_m & 0 \\
               \end{array}
             \right), n=2m, m\text{ even}, (X-1)^m,
             (X-1)^m,
$$
$$
E_n=\left(
               \begin{array}{cc}
                 0 & J_m(1) \\
                 I_m & 0 \\
               \end{array}
             \right), n=2m, m\text{ odd}, \chr(F)=2, (X-1)^m,
             (X-1)^m,
$$
$$
F_n=\left(
               \begin{array}{cc}
                 0 & J_m(-1) \\
                 I_m & 0 \\
               \end{array}
             \right),n=2m, m \text{ odd}, \chr(F)\neq 2, (X+1)^m,
             (X+1)^m,
$$
$$
G_n(\lambda)=\left(
               \begin{array}{cc}
                 0 & J_m(\lambda) \\
                 I_m & 0 \\
               \end{array}
             \right)
, 0\neq \lambda\in F, n=2m, (X-\lambda)^m,(X-\lambda^{-1})^m.
$$
Furthermore, any non-degenerate, indecomposable and split form on
$V$ admits one and only of these as Gram matrix, except only for
the fact that $G_n(\lambda)$ and $G_n(\lambda^{-1})$ represent
equivalent forms.
\end{theorem}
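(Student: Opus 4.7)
The plan is to assemble the statement from the three component theorems established in \S\ref{sec:oneortwo}, \S\ref{sec:one}, and \S\ref{sec:two}, together with Lemma \ref{grama}. First I would partition indecomposable non-degenerate split forms $f$ by the eigenvalues of their asymmetry $\si$. Since $V$ decomposes orthogonally along the generalized eigenspaces of $\si$, and by Lemma \ref{lem83} non-reciprocal generalized eigenspaces are orthogonal, Lemma \ref{desq} combined with indecomposability forces $\si$ to have eigenvalues of exactly one of three types: a reciprocal pair $\la,\la^{-1}$ with $\la^2\neq 1$; the single eigenvalue $1$; or the single eigenvalue $-1$.

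For the first assertion, I would dispatch these cases in turn. The reciprocal-pair case is already covered by Lemma \ref{grama}. In the single-eigenvalue cases, Lemma \ref{uno}(i) guarantees $V=V_m$ for some $m\geq 1$, and parts (ii)--(iii) of the same lemma split further on whether $\widehat{f}$ is non-alternating or alternating, i.e.\ whether $\si$ has one or two elementary divisors. The one-elementary-divisor case is exactly the theorem of \S\ref{sec:one}, which requires quadratic closure; the two-elementary-divisor case is Theorem \ref{tota}, also valid under quadratic closure.

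For the second assertion --- that each listed Gram matrix represents a non-degenerate indecomposable split form with the stated elementary divisors --- I would verify each family by direct computation of the asymmetry and then apply an indecomposability criterion. For $G_n(\la)$ the asymmetry is the block $\mathrm{diag}(J_m(\la)',J_m(\la)^{-1})$, whose elementary divisors are $(X-\la)^m$ and $(X-\la^{-1})^m$; Lemma \ref{grama} yields indecomposability when $\la^2\neq 1$. For $A_n,B_n,C_n$ --- the Horn--Sergeichuk matrices $\Gamma_n$ and $\Gamma_n^0$ --- the concluding remarks of \S\ref{sec:one} already record that the asymmetry has a single elementary divisor of the stated form, and Lemma \ref{uno}(ii) gives indecomposability. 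For $D_n,E_n,F_n$ the asymmetry is $\mathrm{diag}(J_m(\pm 1)',J_m(\pm 1)^{-1})$, producing two equal elementary divisors $(X\mp 1)^m$; indecomposability for $E_n$ is immediate from the corollary of Lemma \ref{uno} stated in \S\ref{sec:oneortwo}, while for $D_n$ and $F_n$ a direct check via Lemma \ref{gorro} shows that $\widehat{f}$ has scalar asymmetry $-1$ and is therefore alternating, so the same corollary applies.

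For the third assertion, I would run the preceding analysis in reverse: given an indecomposable non-degenerate split $f$, decide which of the three eigenvalue types occurs, then realize $f$ as one of the listed matrices by invoking, respectively, Lemma \ref{grama}, the theorem of \S\ref{sec:one}, or Theorem \ref{tota}. Uniqueness of the representative follows because the similarity type of $\si$ is an invariant of $f$ and the listed matrices are distinguished by their asymmetries, apart from the $G_n(\la)\sim G_n(\la^{-1})$ coincidence already provided by Lemma \ref{grama}. The hardest step --- really the only one requiring any care --- is checking that the parity and characteristic restrictions on $n$, $m$ and $\chr(F)$ inherited from \S\ref{sec:one} and \S\ref{sec:two} exactly match those built into the list $A_n,\dots,F_n$, so that the isomorphism types of existing indecomposable split forms and of listed Gram matrices are in bijection; beyond that the proof is essentially bookkeeping.
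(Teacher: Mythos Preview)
Your assembly is correct and is precisely the approach the paper intends: Theorem~\ref{t1} is stated there without proof, the preceding sentence declaring that the results of \S\ref{sec:oneortwo}--\S\ref{sec:two} ``readily yield'' it, and your outline is exactly that bookkeeping. One small slip: your argument that $\widehat{f}$ is alternating for $D_n$ via ``scalar asymmetry $-1$'' only works when $\chr(F)\neq 2$; for $D_n$ in characteristic~$2$ you should instead invoke the Wall-type lemma at the end of \S\ref{sec:oneortwo} (or simply the corollary there on the Gram matrix $\left(\begin{smallmatrix}0&J_m(1)\\ I_m&0\end{smallmatrix}\right)$, which covers $D_n$ and $E_n$ uniformly in characteristic~$2$).
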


\begin{theorem}\label{mami} Suppose that $F$ is quadratically closed. Let $f,g:V\times V\to F$ be
non-degenerate split forms with respective asymmetries $\si$ and
$\tau$. Suppose $\si$ and $\tau$ are similar. Then

(i) If $\chr(F)\neq 2$, or $\chr(F)=2$ and no $(X-1)^m$ with $m$
odd is an elementary divisor of $\si$, then $f$ and $g$ are
equivalent.

(ii) Suppose that $\chr(F)=2$ and $(X-1)^m$, for some odd $m$, is
an elementary divisor of $\si$. Let $V(\si)$ (resp. $V(\tau))$ be
the generalized 1-eigenspace of $V$ with respect to $\si$ (resp.
$\tau$), and consider any $f$-orthogonal (resp. $g$--orthogonal)
decomposition
$$
V(\si)=U_1\perp\cdots\perp U_r\, (\text{resp. }
V(\si)=W_1\perp\cdots\perp W_r),
$$
where each $U_m$ (resp. $W_m$) is a free $F[X]/((X-1)^m)$-module
via $\si$ (resp. $\tau$), $1\leq m\leq r$. For each odd $m$ such
that $1\leq m\leq r$, let $f_m$ (resp. $g_m$) be the
non-degenerate symmetric bilinear form on $U_m/(\si-1)U_m$ (resp.
$W_m/(\tau-1)W_m$)
$$
f_m(u+(\si-1)U_m,v+(\si-1)U_m)=f((\si-1)^{m-1}u,v)\, \quad u,v\in
U_m
$$
$$
( \text{resp. }
g_m(u+(\tau-1)W_m,v+(\tau-1)W_m)=g((\tau-1)^{m-1}u,v), \quad
u,v\in W_m).
$$
Then $f$ and $g$ are equivalent if and only if for each odd $m$,
$1\leq m\leq r$, such that $U_m\neq 0$, $f_m$ and $g_m$ are both
alternating or both non-alternating.
\end{theorem}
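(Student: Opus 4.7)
The plan is to combine the Riehm reduction of Section~\ref{sec:riehm} with the results of Sections~\ref{sec:oneortwo}--\ref{sec:two} to assemble the equivalence class of $f$ from data that is (in most cases) determined by $\si$, plus one extra invariant per odd $m$ in characteristic~$2$. First I would decompose $V$ via~(\ref{funda4}) and Lemma~\ref{lem83} as $V=V_1\oplus V_{-1}\oplus\bigoplus(V_\la\oplus V_{\la^{-1}})$ with $\la^2\neq 1$; by Lemma~\ref{caca} the pairs with $\la^2\neq 1$ contribute forms whose equivalence class is determined by the similarity type of $\si$ alone, so the corresponding summands of $f$ and $g$ match automatically. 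On the remaining $V_1$ and $V_{-1}$ I apply Riehm's decomposition to write $V_{\pm 1}=U_1\perp\cdots\perp U_r$, where $U_m$ is a free $F[X]/(p^m)$-module ($p=X\mp 1$) whose $f$-equivalence class is determined by $f$. Since $\si$ and $\tau$ are similar, the analogous decomposition of $g$ has Riehm summands $W_m$ with $\dim U_m=\dim W_m$, and the problem reduces to proving $f|_{U_m}\sim g|_{W_m}$ for every $m$.

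For fixed $m$ the asymmetry of $f|_{U_m}$ has all elementary divisors equal to $p^m$, so Sections~\ref{sec:oneortwo}--\ref{sec:two} apply. Consider $\widehat{f|_{U_m}}$ from Lemma~\ref{gorro}. If it is alternating, every indecomposable $f$-orthogonal summand of $U_m$ has two elementary divisors $p^m$ by Lemma~\ref{uno} and is determined up to equivalence by its asymmetry by Theorem~\ref{tota}. If it is non-alternating, then the symmetric form on $U_m/pU_m$ can be diagonalized~\cite{K}, which lets us pick an $f$-orthogonal decomposition of $U_m$ into indecomposable pieces each having one elementary divisor $p^m$, and Theorem~\ref{totu} determines each such piece from its asymmetry. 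Hence the equivalence class of $f|_{U_m}$ is fixed once we know $\si|_{U_m}$ \emph{together with} the alternating or non-alternating nature of $\widehat{f|_{U_m}}$.

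It remains to decide when this extra datum is already forced by $\si$. By Lemma~\ref{gorro} the scalar asymmetry of $\widehat{f|_{U_m}}$ is $(-1)^{m-1}$ if $p=X-1$ and $(-1)^m$ if $p=X+1$, so in characteristic not~$2$ the alternating nature is completely fixed, while in characteristic~$2$ with $m$ even the last lemma of Section~\ref{sec:oneortwo} forces it to be alternating via Wall's argument; this gives part~(i). Part~(ii) is the remaining case $\chr(F)=2$ with $m$ odd, where both options genuinely occur and the form $f_m$ in the statement coincides with $\widehat{f|_{U_m}}$. Its alternating character is an invariant of $f$ (because the equivalence class of $f|_{U_m}$ is), so necessity is clear; sufficiency follows because matching it for every odd $m$ with $U_m\neq 0$, combined with the similarity of $\si$ and $\tau$, is exactly what the previous paragraph needs to conclude $f\sim g$. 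The main technical point is ensuring that in the non-alternating case the above decomposition really produces components that are individually non-alternating, which rests on the diagonalizability of non-alternating symmetric forms over a quadratically closed field of characteristic~$2$.
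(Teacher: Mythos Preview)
Your proposal is correct and follows precisely the route the paper intends: the paper does not give a separate proof of this theorem but states that it is obtained by combining the Riehm decomposition of \S\ref{sec:riehm} with the results of \S\ref{sec:oneortwo}--\S\ref{sec:two}, exactly as you do, and your outline matches the strategy sketched in the introduction. The one technical point you flag---that in the non-alternating case one can choose the $f$-orthogonal decomposition of $U_m$ so that every indecomposable piece has a single elementary divisor---is justified by noting that if $\bar v_1$ is taken from a diagonalizing basis of $\widehat{f|_{U_m}}$, then splitting off $F[X]v_1$ leaves a complement whose induced $\widehat{\ }$ is the $\widehat{f}$-orthogonal complement of $\bar v_1$, hence again diagonal and non-alternating; iterating gives the desired decomposition.
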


\begin{note} Suppose $F$ is algebraically closed. Then Theorem \ref{mami} makes it obvious that every
square matrix over $F$ is congruent to its transpose, i.e., that
$f$ is equivalent to its transpose $f'$, defined by
$f'(u,v)=f(v,u)$. Indeed, since $J_m\sim J_m'$ (trivial),
Gabriel's decomposition reduces the result to the case when $f$ is
non-degenerate. Let $\si$ be the asymmetry of $f$. Then $f'$ has
asymmetry $\si^{-1}$. Since $\si$ is similar to $\si^{-1}$, it
follows at once from Theorem \ref{mami} (whether $\chr(F)=2$ or
not) that $f$ is equivalent to $f'$.
\end{note}

\begin{theorem} Suppose that $F$ is quadratically closed and that $f$ is
a non-degenerate split form. Then $f$ admits as Gram matrix the
direct sum of indecomposable matrices taken from Theorem \ref{t1},
without simultaneously using summands $B_{\ell}$ and $E_{2\ell}$,
$\ell$ odd, when $\chr(F)=2$. The summands of such decomposition
are uniquely determined by $f$, except only for the fact that
$G_m(\lambda)$ and $G_m(\lambda^{-1})$ are interchangeable.
\end{theorem}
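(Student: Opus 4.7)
The plan is to establish existence first, by assembling the structural results of Sections~\ref{sec:riehm}--\ref{sec:two}, and then to read off uniqueness directly from Theorem~\ref{mami}.

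First I would split $V$ orthogonally according to the generalized eigenspaces of $\sigma$. By (\ref{funda4}) together with Lemma~\ref{lem83}, $V$ becomes an orthogonal sum of pieces of two kinds: $V_\lambda\oplus V_{\lambda^{-1}}$ with $\lambda^2\ne 1$, and $V_{\pm 1}$. Pieces of the first kind yield $G_n(\lambda)$ blocks via Lemma~\ref{caca} and Lemma~\ref{grama}, and the $\lambda\leftrightarrow\lambda^{-1}$ interchange there is exactly the exception recorded in the statement. For each piece $V_{\pm 1}$ I would invoke Riehm's orthogonal decomposition $V_{\pm 1}=V_1\perp\cdots\perp V_r$ from Section~\ref{sec:riehm}, reducing to the case $V=V_m$, free over $F[X]/(p^m)$ with $p=X\mp 1$.

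Within such a $V_m$ I would iteratively peel off indecomposables using Lemma~\ref{uno}. If $\widehat{f}_m$ is non-alternating, there is $v\in V_m$ with $f(p^{m-1}v,v)\ne 0$, and the proof of Lemma~\ref{uno}(ii) shows $F[X]v$ is a non-degenerate cyclic summand; iterating yields an orthogonal sum of cyclic pieces whose Gram matrices are of type $A_n$, $B_n$, or $C_n$ by Theorem~\ref{totu}. If $\widehat{f}_m$ is alternating, the construction in the proof of Lemma~\ref{uno}(iii) peels off rank-two indecomposable summands, whose Gram matrices are of type $D_n$, $E_n$, or $F_n$ by Theorem~\ref{tota}. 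The exclusion clause, that $B_\ell$ and $E_{2\ell}$ never appear simultaneously when $\chr(F)=2$ and $\ell$ is odd, is exactly this dichotomy at work: $\widehat{f}_\ell$ is either alternating (forcing every indecomposable to carry two elementary divisors, hence only $E_{2\ell}$ blocks) or non-alternating (where, over a quadratically closed field, a symmetric non-alternating form can be diagonalized, so the lifting of the previous paragraph runs to completion and yields only $B_\ell$ blocks), but never both.

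For uniqueness I would invoke Theorem~\ref{mami}. The similarity type of $\sigma$ is a congruence invariant of $f$, so it pins down the multiplicity of every elementary divisor and hence the multiplicity of every summand of type $A_n$, $C_n$, $D_n$, $F_n$, or $G_n(\lambda)$, with the $G_n(\lambda)\leftrightarrow G_n(\lambda^{-1})$ swap being the only ambiguity. In characteristic~$2$ with odd $\ell$, Theorem~\ref{mami}(ii) additionally records the alternating or non-alternating nature of $f_\ell$, which determines whether the $V_\ell$-contribution is packaged as an $E_{2\ell}$ block or as a collection of $B_\ell$ blocks. The main obstacle is the existence step in the $\chr(F)=2$, $\ell$ odd case: one must genuinely lift a diagonalization of $\widehat{f}_\ell$ on $V_\ell/pV_\ell$ to an actual $f$-orthogonal decomposition of $V_\ell$ into cyclic summands, rather than merely a decomposition modulo $pV_\ell$; once this is secured, the rest is bookkeeping applied to the structure theorems already established.
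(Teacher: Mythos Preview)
Your outline matches the paper's approach exactly; the paper in fact gives no separate proof of this theorem, treating it as the assembly of \S\ref{sec:riehm}--\S\ref{sec:two} and Theorem~\ref{mami}, just as you do.

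The ``main obstacle'' you flag at the end is real but is already dissolved by the ingredient you mentioned two sentences earlier. Diagonalizability of a non-degenerate non-alternating symmetric form holds over \emph{any} field of characteristic~2 (quadratic closure is not needed here; it is used only inside Theorems~\ref{totu} and~\ref{tota}). Fix such a diagonalizing basis $\bar v_1,\dots,\bar v_k$ of $V_\ell/pV_\ell$ for $\widehat f_\ell$, lift $\bar v_1$ to $v_1\in V_\ell$, and split off $U_1=F[X]v_1$ via Lemma~\ref{uno}(ii) and Lemma~\ref{desq}. The point is that $U_1^\perp/pU_1^\perp$ is exactly the $\widehat f_\ell$-orthogonal complement of $\bar v_1$ in $V_\ell/pV_\ell$, namely $\langle \bar v_2,\dots,\bar v_k\rangle$, on which $\widehat f_\ell$ is again diagonal with non-zero entries, hence non-alternating. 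Thus the iteration never stalls, and the decomposition of $V_\ell$ into cyclic summands goes through, yielding only $B_\ell$-blocks. This is precisely what the paper means by ``we can always choose these components to be non-alternating'' in \S\ref{sec:oneortwo}.
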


\noindent{\bf Acknowledgements.} I thank V. Sergeichuk for useful
comments. I dedicate this paper to D. Djokovic in appreciation for
the valuable times spent together discussing all sorts of
algebraic problems. I learnt the basic ideas of this particular
subject from him.

\noindent{\bf Added in Proof.} Complex matrix representatives
under congruence were already known to Turnbull and Aitken in
1932. See [31, p. 139] for details. I am grateful to F. de Ter\'an
for this information.

\end{document}